\tikzset{In/.tip = {Hooks[right]}}
\tikzset{Onto/.tip = {To[sep] To}}
\tikzset{Eq/.style = {-, double equal sign distance}}
\tikzset{Iso/.style = {edge node = {node[above] {$\sim$}}, inner sep = 0}}
\definecolor{alizarin}{rgb}{0.82, 0.1, 0.26}
\definecolor{darkred}{rgb}{0.55, 0.0, 0.0}
\definecolor{brightmaroon}{rgb}{0.76, 0.13, 0.28}
\newcommand{\bC}{{\mathbb C}}
\newcommand{\bP}{{\mathbb P}}
\newcommand{\bQ}{{\mathbb Q}}
\newcommand{\bR}{{\mathbb R}}
\newcommand{\bV}{{\mathbb V}}
\newcommand{\bZ}{{\mathbb Z}}
\newcommand{\cF}{{\mathcal F}}
\newcommand{\cK}{{\mathcal K}}
\newcommand{\cL}{{\mathcal L}}
\newcommand{\cO}{{\mathcal O}}
\newcommand{\cP}{{\mathcal P}}
\newcommand{\cV}{{\mathcal V}}
\newcommand{\cW}{{\mathcal W}}
\newcommand{\cX}{{\mathcal X}}
\newcommand{\onabla}{\overline{\nabla}}
\newcommand{\tp}{\widetilde{p}}
\newcommand{\ra}{\rightarrow}
\newcommand{\lra}{\longrightarrow}
\newtheorem{proposition}{Proposition}[section]
\newtheorem{lemma}[proposition]{Lemma}
\newtheorem{theorem}[proposition]{Theorem}
\newtheorem{theoremi}{Theorem}
\newtheorem{conjecture}[theoremi]{Conjecture}
\newtheorem{question}[theoremi]{Question}
\newtheorem{definition}[proposition]{Definition}
\newtheorem{corollary}[proposition]{Corollary}
\theoremstyle{remark}%%YAGNA Added: so that the remark is not in italics
\newtheorem{remark}[proposition]{Remark}
\theoremstyle{definition}
\numberwithin{equation}{section}
\newcommand{\Hom}{\operatorname{Hom}}
\newcommand{\Ker}{\operatorname{Ker}}
\newcommand{\Pic}{\operatorname{Pic}}
\newcommand{\sB}{\mathcal{B}}
\newcommand{\sC}{\mathcal{C}}
\newcommand{\sF}{\mathcal{F}}
\newcommand{\sH}{\mathcal{H}}
\newcommand{\sK}{\mathcal{K}}
\newcommand{\sL}{\mathcal{L}}
\newcommand{\sP}{\mathcal{P}}
\newcommand\sW{{\mathcal W}}
\newcommand\sX{{\mathcal X}}
\newcommand{\bbC}{\mathbb{C}}
\newcommand{\bbP}{\mathbb{P}}
\newcommand{\bbQ}{\mathbb{Q}}
\newcommand{\bbR}{\mathbb{R}}
\newcommand{\bbV}{\mathbb{V}}
\newcommand{\bbZ}{\mathbb{Z}}
\newcommand{\scrC}{\mathscr{C}}
  \def\th@plain{
  \thm@headfont{\bfseries} % Heading font is bold
  \thm@notefont{\itshape} % Note is same as heading
  \itshape% Regular text is also italic
}
\DeclareMathOperator{\Def}{Def}
\DeclareMathOperator{\reg}{reg}                  % reg
\DeclareMathOperator{\rank}{rank}
\DeclareMathOperator{\nef}{nef}
\DeclareMathOperator{\Mon}{Mon}
\begin{document}
\title[Some Density Results for HyperK\"ahler Manifolds]{Some Density Results for HyperK\"ahler Manifolds}

\author{Yajnaseni Dutta}
\address{Mathematisch Instituut, Universiteit Leiden, Einsteinweg 55, 2333 CC Leiden}
\email{y.dutta@math.leidenuniv.nl}
\author{Elham Izadi}

\address{Department of Mathematics, University of California San Diego, 9500 Gilman Drive \# 0112, La Jolla, CA
92093-0112, USA}

\email{eizadi@math.ucsd.edu}

\author{Ljudmila Kamenova}

\address{Department of Mathematics, Room 3-115, 
Stony Brook University, Stony Brook, NY 11794-3651, USA}

\email{kamenova@math.stonybrook.edu}

\author{Lisa Marquand}

\address{Courant Institute of Mathematical Sciences, New York University, 251 Mercer St, NY 10012, USA}

\email{lisa.marquand@nyu.edu}

\newcommand{\yd}[1]{{\color{magenta} \tt $\spadesuit$ Yagna: [#1]}}
\newcommand{\lisa}[1]{{\color{purple} \sf $\clubsuit$ Lisa: [#1]}}
\newcommand\lmedit[1]{{\color{purple} \sf #1}}
\newcommand{\lj}[1]{{\color{blue} \sf $\diamondsuit$ Ljudmila: [#1]}}

\dedicatory{}

\thanks{}
\subjclass[2020]{primary: 14D05, 14D06, 14J42. secondary: 37J38}
\keywords{Hyperkähler manifolds, Lagrangian fibration, Deformation, Density.}

\begin{abstract}

Lagrangian fibrations of hyperk\"ahler manifolds are induced by semi-ample line bundles which are isotropic with respect to the Beauville--Bogomolov--Fujiki form. For a non-isotrivial family of hyperk\"ahler manifolds over a complex manifold $S$ of positive dimension, we prove that the set of points in $S$, for which there is an isotropic class in the Picard lattice of the corresponding hyperk\"ahler manifold represented as a fiber over that point, is analytically dense in $S$. 
We also prove the expected openness and density of the locus of polarised hyperk\"ahler manifolds that admit a nef algebraic isotropic line bundle. 
\end{abstract}

\maketitle

\section*{Introduction}
A hyperk\"ahler manifold is a simply connected compact K\"ahler manifold $X$ of dimension $2n$
admitting a unique (up to scaling) global homolomorphic symplectic 2-form $\sigma\in H^0(X,\Omega_X^2)$. The existence of such a 2-form makes $X$ Calabi--Yau in the sense that its canonical bundle is trivial. In dimension 2, these are K3 surfaces. In higher dimensions, two infinite series and two sporadic examples are known \cite{Bea83,OG99, OG03}, up to deformation.

Since any hyperk\"ahler manifold $X$ comes equipped with a holomorphic symplectic form $\sigma$, Lagrangian subvarieties of $X$, i.e.,\ subvarieties $Z\subset X$ of dimension $n=\frac{1}{2}\dim X$ such that $\sigma|_{Z^{\rm reg}} = 0$, are interesting to study. 
It is known \cite{Mat} that any morphism $f\colon X\to B$ where $B$ is a normal complex variety with $0<\dim B<2n$, is in fact a Lagrangian fibration, meaning a surjective morphism with connected fibers where the general fiber is a Lagrangian subvariety. The general fibers are in fact abelian varieties, as proved by Voisin (see \cite[Prop. 2.1]{Campana}). 
If follows from \cite[Remark 1.4]{matdeform} that the base $B$ is projective. Moreover, whenever $B$ is smooth, it is in fact isomorphic to $\bbP^n$ \cite{Hwa08, GrebLehn}. A Lagrangian fibration with base $B\cong\bbP^n$ is induced by a line bundle $\sL$ on $X$ with $c_1(\sL)^{n+1} = 0$. 

The second cohomology $H^2(X,\bZ)$ comes equipped with a unique non-degenerate quadratic form $q_X$, the Beaville--Bogomolov--Fujiki form, which controls a great deal of geometric information about the hyperk\"ahler manifold $X$. It has signature $(3, b_2 - 3),$ and there exists a non-zero constant $c_X \in \bbQ$ satisfying $\alpha^{2n} = c_X\cdot q_X(\alpha)^n$. In particular, admitting a line bundle $\sL$ with $c_1(\sL)^{n+1}=0$ is equivalent to $q_X(\sL)=0.$ Therefore, the first step in studying Lagrangian fibrations is the study of isotropic classes. 

We prove the following:

\begin{theoremi}\label{thm:densityfamily}
Let $\cX\rightarrow S$ be a non-isotrivial family of hyperk\"ahler manifolds with $b_2(X_t)\geq 5$ over a complex manifold $S$ with $\dim S>0.$ Then the set
\[\{t\in S\mid \exists \alpha\in H^{1,1}(\cX_t,\bbZ), \alpha\neq 0,  q_X(\alpha)=0\}\]
is either empty or dense in $S$ in the analytic topology. 
\end{theoremi}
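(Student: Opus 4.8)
\emph{Proposed approach.} The plan is to pass to the period map and reduce to a local statement on a polydisc, where one runs a density argument for Hodge loci in the style of Green's argument for Noether--Lefschetz loci. Assume $S$ connected. Since the family is non-isotrivial, its Kodaira--Spencer map is not identically zero; by the local Torelli theorem the differential $d\varphi$ of the period map $\varphi$ is the local Torelli isomorphism composed with Kodaira--Spencer, hence $d\varphi$ is not identically $0$, and being holomorphic it is nonzero on a dense open $S^{\circ}\subseteq S$. Since density in $S^{\circ}$ forces density in $S$, fix $t_0\in S^{\circ}$ and a small polydisc $\Delta\ni t_0$ on which $R^2\pi_*\bbZ$ is trivial; this identifies every $H^2(\cX_t,\bbZ)$, $t\in\Delta$, with a fixed lattice $(\Lambda,q)$ of signature $(3,b_2-3)$ and produces a holomorphic period map $\varphi\colon\Delta\to\Omega\subseteq\bbP(\Lambda\otimes\bbC)$ into the period domain $\Omega=\{[\sigma]:q(\sigma)=0,\ q(\sigma,\bar\sigma)>0\}$, together with a holomorphic lift $t\mapsto\sigma_t\in\Lambda\otimes\bbC$. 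Because $H^{2,0}(\cX_t)=\bbC\sigma_t$, a real class $\lambda\in\Lambda$ is of type $(1,1)$ on $\cX_t$ exactly when $q(\lambda,\sigma_t)=0$ (the condition $q(\lambda,\bar\sigma_t)=0$ being its conjugate). So it suffices to show: for every such $\Delta$ there are $t\in\Delta$ and $\lambda\in\Lambda\setminus\{0\}$ with $q(\lambda)=0$ and $q(\lambda,\sigma_t)=0$.

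Write $\sigma_0=\sigma_{t_0}$. The real and imaginary parts of $\sigma_0$ span a positive-definite plane $\Pi$ (as $q(\sigma_0)=0$ and $q(\sigma_0,\bar\sigma_0)>0$), so $\sigma_0^{\perp}\cap(\Lambda\otimes\bbR)=\Pi^{\perp}$ carries a nondegenerate form of signature $(1,b_2-3)$; since $b_2\ge 4$ this is indefinite, so its real isotropic cone $C$ contains nonzero vectors and spans $\Pi^{\perp}$. Next I would choose $w_0\in C$ that is transverse to the period map: since $d\varphi_{t_0}\neq 0$, its image $\Theta\subseteq\sigma_0^{\perp}/\bbC\sigma_0=T_{\varphi(t_0)}\Omega$ is a nonzero subspace, and some $w_0\in C$ must satisfy $q(w_0,\delta)\neq 0$ for some lift $\delta\in\sigma_0^{\perp}$ of an element of $\Theta$. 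Indeed, otherwise $C$ would be orthogonal to (the preimage in $\sigma_0^{\perp}$ of) $\Theta$; since $C$ spans $\Pi^{\perp}$ and orthogonals in $\Lambda\otimes\bbC$ are complex-linear, this forces $\Pi^{\perp}\otimes\bbC=\sigma_0^{\perp}\cap\bar\sigma_0^{\perp}$ to be orthogonal to the preimage of $\Theta$, hence that preimage lies in $\bbC\sigma_0+\bbC\bar\sigma_0$; but it also lies in $\sigma_0^{\perp}$ and $\bar\sigma_0\notin\sigma_0^{\perp}$, so it equals $\bbC\sigma_0$, i.e. $\Theta=0$, a contradiction.

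Now consider $F\colon\Delta\times(\Lambda\otimes\bbR)\to\bbC$, $F(t,w)=q(\sigma_t,w)$, holomorphic in $t$ and $\bbR$-linear in $w$. Then $F(t_0,w_0)=0$, and its $t$-partial differential there is $v\mapsto q(d\varphi_{t_0}(v),w_0)$, a nonzero — hence surjective — $\bbC$-linear functional on $T_{t_0}S$, by the choice of $w_0$. By the implicit function theorem the projection $F^{-1}(0)\to\Lambda\otimes\bbR$ is a submersion near $(t_0,w_0)$, so its image contains a neighborhood $W$ of $w_0$: every $w\in W$ satisfies $q(\sigma_t,w)=0$ for some $t\in\Delta$. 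Finally, since $(\Lambda,q)$ is an indefinite integral lattice of rank $b_2\ge 5$, Meyer's theorem (Hasse--Minkowski) gives that $q$ represents $0$ over $\bbQ$; hence the rational isotropic vectors are dense in the real isotropic cone $\{q=0\}\subseteq\Lambda\otimes\bbR$, and as $w_0$ lies on that cone we may pick a rational isotropic $w\in W$. Clearing denominators yields $\lambda\in\Lambda\setminus\{0\}$ with $q(\lambda)=0$ and $q(\sigma_t,\lambda)=0$ for some $t\in\Delta$, i.e. $\cX_t$ carries a nonzero isotropic integral $(1,1)$-class. Since $\Delta$ (around an arbitrary $t_0\in S^{\circ}$) was arbitrarily small, the locus is dense in $S^{\circ}$, hence dense in $S$; in particular it is nonempty.

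The crux is the transversality step in the second and third paragraphs: a generic hyperplane-type Hodge locus $\{q(\,\cdot\,,\lambda)=0\}$ has codimension one and need not meet the possibly one-dimensional image of $\varphi$, and this is overcome precisely by choosing $w_0$ not orthogonal to the image of $d\varphi_{t_0}$, which makes the base-direction of $F$ submersive and upgrades ``$\lambda$ is nearly of type $(1,1)$ somewhere in $\Delta$'' to ``$\lambda$ is of type $(1,1)$ at an actual point of $\Delta$''. The linear-algebra inputs feeding this — the signature $(1,b_2-3)$ of $\Pi^{\perp}$ and the non-vanishing of $d\varphi$ on a dense open set (where non-isotriviality enters, via the identity theorem) — are routine; the other genuinely essential ingredient, and the one place where the hypothesis $b_2\ge 5$ is used, is the arithmetic fact that an indefinite integral form of rank $\ge 5$ is isotropic over $\bbQ$, which is what permits approximating $w_0$ by rational isotropic classes.
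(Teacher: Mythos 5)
Your proposal is correct and takes essentially the same route as the paper: both arguments hinge on choosing a real isotropic class in $\sigma_0^{\perp}$ that is not orthogonal to the image of the differential of the period map, deducing that the incidence variety of isotropic $(1,1)$-classes submerses onto the isotropic cone, and then concluding via Meyer's theorem and the density of rational isotropic vectors. Your explicit implicit-function-theorem computation is just an unwinding of the submersion criterion the paper imports from Voisin's Lemma 5.21, and your restriction to the dense open locus where $d\varphi\neq 0$ handles a point that the paper's \Cref{lem: submersion} glosses over (the differential of a non-constant period map can still vanish at individual points of $S$).
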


%\yd{Once this preprint was distributed Tayou communicated to us his result \cite[Corollary 1.3]{Tayou} where he had proved this result when $S$ is a curve and improved it \cite{TT} for higher-dimensional base. }\lisa{I think we should add his results to the paragraph below, and maybe thank him for pointing it out? rather than putting it as a remark.}

%

The case when $S=\Def(X)$, the whole deformation space of $X$, is already known. 
For  hyperk\"ahler manifolds, there is a global Torelli theorem which allows one to prove such density statements directly on the period domain (i.e.,\ the type IV symmetric domain associated to $(H^2(X,\bbZ), q_X)$; see \S \ref{sec: dense isotrop} for details). On the period domain, these types of density results follow from ergodic properties \cite[Prop.\ 7.1.3]{HuyK3}. For certain moduli spaces of lattice polarised hyperk\"ahler manifolds, Mongardi and Pacienza establish similar density results in \cite[Theorem 3.15]{MongardiPacienza}. 
However, the question of density in an arbitrary family demands different methods. Our argument is Hodge theoretic and hence applies more generally to any non-isotrivial family of K3-type Hodge structures (see Theorem \ref{thm: isot dense} for a precise statement). 
%We follow \cite[\S 5.3.4]{VoisinHTtwo}. \lisa{}

One should compare Theorem \ref{thm: isot dense} to the classical density result for Noether--Lefschetz loci, which, in the set-up of the theorem, states that the locus of hyperk\"ahler manifolds where the Picard rank jumps is analytically dense in the base. This result is  attributed to Oguiso \cite[Theorem 1.1]{Oguiso} and the key ingredient (also a key ingredient in our proof) is the criterion for the density of the Hodge classes, due to Green \cite[Proposition 5.20]{VoisinHTtwo}. 
For certain families of polarized hyperk\"ahler manifolds, the density of the loci of hyperk\"ahlers with algebraic isotropic classes follows from  equidistribution results of Tayou \cite{Tayou}, and Tayou and Tholozan \cite{TT}. 

%\sout{In the pursuit of Lagrangian fibrations, algebraic isotropic\footnote{isotropic = square zero} classes as above are in a way insufficient; a hyperk\"ahler manifold $X$ may have such an algebraic isotropic class $\alpha$, while none of the elements in its orbit under birational self-maps of $X$ are nef} (see \cite[Thm. 4.1]{Sacbirational} for an example).

Nefness is a necessary condition for an algebraic isotropic class $\alpha$ to induce a Lagrangian fibration. The hyperk\"ahler SYZ conjecture postulates that it is in fact sufficient. The conjecture was first formulated by Strominger, Yau and Zaslow in \cite{syz} in the context of mirror symmetry of Calabi--Yau pairs. It has been reformulated for hyperk\"ahler manifolds by Huybrechts, Hasset--Tschinkel, Tyurin, Bogomolov, and Sawon.

\begin{conjecture}{\emph{(}e.g., \cite[Conj.\ 4.1, 4.2]{SawSYZ}\emph{)}}\label{conj: SYZ1}\label{conj: SYZ2}
\begin{enumerate}
    \item A hyperk\"ahler manifold $X$ has a Lagrangian fibration if and only if it admits a non-trivial isotropic nef line bundle $L$.
    \item A hyperk\"ahler manifold $X$ admits a rational Lagrangian fibration (i.e.,\ it is birational to a hyperk\"ahler manifold which admits a Lagrangian fibration) if and only if it admits a non-trivial isotropic line bundle $L$.
\end{enumerate}
\end{conjecture}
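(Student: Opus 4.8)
The plan is to dispose of the two ``easy'' implications first and then isolate the single analytic input on which both converses rest. For the forward direction of (1), suppose $f\colon X\to B$ is a Lagrangian fibration. As recalled in the introduction, Matsushita's theorem \cite{Mat} forces $\dim B=n$ (and $B\cong\bbP^n$ when $B$ is smooth); in any case $B$ is normal projective, so it carries an ample class $A$. Set $L:=f^*A$. Then $L$ is nef as the pullback of a nef class, it is non-trivial because $f$ is surjective with $\dim B>0$ (so $f^*$ is injective on $H^2$), and it is isotropic because $c_1(L)^{n+1}=f^*\bigl(c_1(A)^{n+1}\bigr)=0$, the class $c_1(A)^{n+1}$ vanishing for dimension reasons on the $n$-dimensional base; by the equivalence $c_1(L)^{n+1}=0\iff q_X(L)=0$ recalled above, $L$ is a non-trivial isotropic nef line bundle. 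The forward direction of (2) follows formally: a rational Lagrangian fibration produces such an $L'$ on a birational model $X'$, and since a birational map of hyperk\"ahler manifolds induces a Hodge isometry $H^2(X',\bbZ)\xrightarrow{\sim}H^2(X,\bbZ)$ for the Beauville--Bogomolov--Fujiki form (Huybrechts), the transform of $L'$ is a non-trivial isotropic line bundle on $X$, now typically no longer nef, which is exactly the point of the rational version.

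For the converse of (1), assume $L$ is a non-trivial isotropic nef line bundle. The key reduction is to prove that $L$ is \emph{semi-ample}. Once some power $L^{\otimes m}$ is globally generated, the associated morphism $f\colon X\to B:=\Proj\bigoplus_{m\ge 0}H^0(X,L^{\otimes m})$ has connected fibers; the Fujiki relation together with the structure of the subalgebra of $H^*(X)$ generated by $H^2$ gives $c_1(L)^n\neq 0$ and $c_1(L)^{n+1}=0$, so the numerical (hence, by semi-ampleness, the Iitaka) dimension is exactly $n$ and $\dim B=n$. Thus $0<\dim B<2n$, and Matsushita's theorem \cite{Mat} upgrades $f$ to a Lagrangian fibration. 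In other words (1) is \emph{equivalent} to semi-ampleness of nef isotropic line bundles, and this is precisely where the difficulty lies.

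For the converse of (2), assume only that $L$ is a non-trivial isotropic line bundle, not necessarily nef. The strategy is to move $L$ to a nef class on a birational model and then invoke the converse of (1). Concretely, I would use the birational geometry of hyperk\"ahler manifolds---the description of the movable cone and the chamber decomposition of the positive cone by the K\"ahler cones of the birational models, together with the action of the monodromy reflections---to produce a birational model $X'\dashrightarrow X$ on which the transform $L'$ of $L$ becomes nef. Because the transform is a Hodge isometry it preserves $q_X$ and isotropy, so $L'$ is a non-trivial isotropic nef line bundle on $X'$, and the converse of (1) then yields a Lagrangian fibration on $X'$, i.e.\ a rational Lagrangian fibration on $X$. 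This birational step is governed by known structural results (the Morrison--Kawamata cone conjecture for hyperk\"ahlers and the classification of wall divisors), but it again bottoms out in the converse of (1).

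The main obstacle---indeed the entire content of the conjecture---is the semi-ampleness statement isolated above: a nef isotropic line bundle on a hyperk\"ahler manifold need not a priori possess any sections, and controlling its base locus is exactly what is not known in general. For K3 surfaces it is classical, since Riemann--Roch forces sections and a base-point-free pencil produces the elliptic fibration, and it is known for the known deformation types under hypotheses guaranteeing that the isotropic class is represented by a suitable sheaf or lies in a controlled part of the positive cone (work of Matsushita, Markman, Bayer--Macr\`i, Yoshioka, and others). A uniform proof would require a form of abundance for nef isotropic classes on hyperk\"ahler manifolds, which remains open; accordingly the statement is recorded here as Conjecture~\ref{conj: SYZ1} rather than as a theorem, and the density results of this paper are best read as statements about the locus where its hypotheses become available rather than as a resolution of it.
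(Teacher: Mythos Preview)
The statement is a \emph{conjecture}, and the paper does not prove it: it is recorded precisely because the converse implications remain open in general, with the paper only noting that the conjecture is established for the known deformation types via the cited works of Bayer--Macr\`i, Markman, Yoshioka, Mongardi--Rapagnetta, and Mongardi--Onorati. There is therefore no proof in the paper to compare your proposal against.

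That said, your analysis is accurate as an exposition of the state of affairs. The forward implications are indeed elementary as you describe, the reduction of (1) to semi-ampleness of a nef isotropic line bundle is the standard one, and you correctly isolate this abundance-type statement as the genuine obstruction. Your reduction of (2) to (1) via birational models and monodromy is also in line with what the paper records (the equivalence of (1) and (2) across a deformation class is attributed to \cite[Lemma 2.18]{kam-lehn}), though one should be a bit careful: moving an arbitrary isotropic class into the closure of the nef cone of some birational model requires first reflecting it into the closure of the movable cone via monodromy, and then invoking the chamber decomposition---this works, but it is Markman's theorem on the movable cone rather than the full Morrison--Kawamata machinery that is doing the work. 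Your concluding paragraph correctly acknowledges that no proof is being offered, which is appropriate since none exists in general.
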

Note that in (2) the isotropic line bundle $L$ is not required to be nef. An explicit example of a hyperk\"ahler manifold with a strictly rational Lagrangian fibration satisfying (2) was  found in \cite[Thm. 4.1]{Sacbirational}, by performing a Mukai flop along the image of a section of a regular Lagrangian fibration.
Further, according to \cite[Lemma 2.18]{kam-lehn}, for a fixed hyperk\"ahler manifold $X$, then (1) is satisfied for all hyperk\"ahlers deformation equivalent to $X$ if and only if (2) is satisfied for all hyperk\"ahlers deformation to $X$.
Therefore, one may focus on isotropic nef classes. To this end, Matsushita and Kamenova--Verbitsky studied the locus $\Def(X,L)_{\reg}$ of hyperk\"ahler manifolds which admit a regular Lagrangian fibration in the deformation space $\Def(X,L),$ where $L$ is an algebraic isotropic line bundle. They showed:

\begin{theoremi}\cite[Theorem 3.4]{kam-ver},\cite[Theorem 1.2]{mat3}\label{thm:mat}
    Let $X$ be a hyperk\"ahler manifold and $L$ an isotropic line bundle on $X$. If $L$ induces a rational Lagrangian fibration, then $\Def(X,L)_{\reg}$ is a dense open subset of $\Def(X,L)$ and the set $\Def(X,L)\setminus \Def(X,L)_{\reg}$ is contained in a countable union of hypersurfaces in $\Def(X,L)$.
\end{theoremi}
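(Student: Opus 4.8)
The plan to prove Theorem~\ref{thm:mat} is to split it into three parts --- a reduction to the case where $L$ itself is semiample, the openness of $\Def(X,L)_{\reg}$, and the containment of its complement in a countable union of hypersurfaces --- after which density is automatic: $\Def(X,L)$ is a connected complex manifold, hence a Baire space, so the complement of a countable union of analytic hypersurfaces is dense, and an open set containing a dense set is itself dense. For the reduction, if $L$ only induces a \emph{rational} Lagrangian fibration, pass to a birational hyperk\"ahler model $X'$ on which the proper transform $L'$ of $L$ is nef and semiample, inducing a morphism $\pi'\colon X'\to\bbP^n$. The birational map gives a Hodge isometry $H^2(X,\bbZ)\cong H^2(X',\bbZ)$ carrying $c_1(L)$ to $c_1(L')$, so by local Torelli $\Def(X,L)$ and $\Def(X',L')$ are canonically identified; by Huybrechts' comparison of birational families the two universal families coincide over the complement of a countable union of hypersurfaces in this common base, and $\Def(X,L)_{\reg}$ is unaffected by such a modification. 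So we may assume $L$ is semiample and $\pi\colon X\to\bbP^n$ is a regular Lagrangian fibration with $L=\pi^{*}\sO_{\bbP^n}(1)$.

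\emph{Openness.} Fix $t_0\in\Def(X,L)_{\reg}$, with regular Lagrangian fibration $\pi_{t_0}\colon X_{t_0}\to\bbP^n$ and $L_{t_0}=\pi_{t_0}^{*}\sO(1)$. From Matsushita's identification $R^q\pi_{t_0,*}\sO_{X_{t_0}}\cong\Omega^q_{\bbP^n}$, the Leray spectral sequence, and the Euler sequence on $\bbP^n$, one gets $h^0(X_{t_0},L_{t_0})=n+1$ and $H^1(X_{t_0},L_{t_0})=0$. Cohomology and base change then makes $R^0p_{*}\sL$ locally free of rank $n+1$ near $t_0$, where $p\colon\mathcal X\to\Def(X,L)$ is the universal family and $\sL$ extends $L$; so $h^0(X_t,L_t)=n+1$ for $t$ near $t_0$. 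The relative base locus of $\sL$ in $\mathcal X$ is closed and meets $X_{t_0}$ in $\Bs|L_{t_0}|=\emptyset$, so by properness of $p$ it avoids $p^{-1}$ of a neighbourhood of $t_0$; hence $|L_t|$ is base-point free there and defines a morphism $X_t\to\bbP^n$ whose image is a subvariety of $\bbP^n$ of dimension $\le n$ and, by lower semicontinuity of image dimension, $\ge n$, hence all of $\bbP^n$. By Matsushita's structure theorem \cite{Mat} the Stein factorization $X_t\to B_t$ is a Lagrangian fibration, and $B_t$, a small deformation of $B_{t_0}=\bbP^n$, is again $\bbP^n$ by rigidity. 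Thus a neighbourhood of $t_0$ lies in $\Def(X,L)_{\reg}$.

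\emph{The complement.} Let $W$ be the locus in $\Def(X,L)$ where $\Pic(X_t)$ strictly contains $\bbZ L_t$. For each primitive $\beta\in H^2(X,\bbZ)$ not proportional to $L$, the condition that $\beta$ be of type $(1,1)$ on $X_t$ cuts out, via local Torelli, a proper hypersurface $H_\beta\subseteq\Def(X,L)$ (proper because $L^{\perp}$ spans the relevant period domain), so $W=\bigcup_\beta H_\beta$ is a countable union of hypersurfaces and $\Def(X,L)\setminus W$ is connected and dense. It suffices to show $\Def(X,L)\setminus W\subseteq\Def(X,L)_{\reg}$. On $\Def(X,L)\setminus W$ the Picard group is generated by the single isotropic class $L_t$, so $L_t$ meets every curve trivially and is in particular nef. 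Now $\Def(X,L)_{\reg}\cap(\Def(X,L)\setminus W)$ is nonempty (a nonempty open set meets the dense set $\Def(X,L)\setminus W$), open by the previous paragraph, and --- this is the heart of the matter --- closed in $\Def(X,L)\setminus W$: given $t_n\to t_\infty$ in $\Def(X,L)\setminus W$ with each $L_{t_n}$ inducing a regular Lagrangian fibration, one combines upper semicontinuity of cohomology, the deformation invariance $\chi(X_t,L_t^{\otimes k})=n+1$ (from the Huybrechts Riemann--Roch formula and $q(L_t)\equiv0$), the vanishing $H^{2n}(X_{t_\infty},L_{t_\infty})=0$ (since $L_{t_\infty}$ is nef and nonzero), and the nefness of $L_{t_\infty}$ to conclude that $|L_{t_\infty}|$ stays base-point free with $n$-dimensional image, hence --- again by \cite{Mat} and rigidity of $\bbP^n$ --- induces a regular Lagrangian fibration. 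Being open, closed and nonempty in the connected set $\Def(X,L)\setminus W$, it equals all of it.

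\emph{The main obstacle} is exactly this closedness step. A nef isotropic line bundle on a hyperk\"ahler manifold is only conjecturally semiample, so one must genuinely prevent $|L_{t_\infty}|$ from acquiring base points in the limit, and prevent the fibration from degenerating to one over a singular base; this is where the finer analysis of the linear systems $|L_t^{\otimes k}|$ and of the relative geometry of the fibration across the family --- the real content of \cite{kam-ver} and \cite{mat3} --- is indispensable, and it is what confines the argument to the Picard-rank-one locus. A secondary point is making ``$W$ is a countable union of hypersurfaces'' precise, which only requires that there be countably many lattice classes to exclude (equivalently, phrasing the non-nef walls via the boundedness of the relevant negative classes in the style of Amerik--Verbitsky).
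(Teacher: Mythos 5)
The paper does not actually prove this statement: Theorem~\ref{thm:mat} is quoted as background, with the proof deferred entirely to \cite{kam-ver} and \cite{mat3}. So the comparison can only be against those sources, and against the internal completeness of your own argument.

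Your reduction to the semiample case and your openness argument (Matsushita's $R^q\pi_*\sO_X\cong\Omega^q_{\bbP^n}$, Bott vanishing giving $h^0=n+1$ and $H^1(X,L)=0$, cohomology and base change, closedness of the relative base locus, rigidity of $\bbP^n$) are a sound outline of the easier half; this is essentially Matsushita's deformation result for regular Lagrangian fibrations. The problem is that the density claim --- the actual content of the theorem --- rests on the closedness of $\Def(X,L)_{\reg}\cap(\Def(X,L)\setminus W)$ inside $\Def(X,L)\setminus W$, and there you only list ingredients (semicontinuity, $\chi(X_t,L_t^{\otimes k})=n+1$, nefness of $L_{t_\infty}$) and then concede that preventing $|L_{t_\infty}|$ from acquiring base points ``is where the finer analysis \dots is indispensable.'' That concession is precisely the gap: with Picard group $\bbZ L_{t_\infty}$ and $L_{t_\infty}$ nef isotropic, semiampleness of $L_{t_\infty}$ is exactly (an instance of) the SYZ Conjecture~\ref{conj: SYZ1}, and it does not follow from Euler characteristics plus nefness. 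Upper semicontinuity gives $h^0(L_{t_\infty})\geq n+1$, but nothing in your list rules out a nonempty base locus or a degenerate image. The actual proofs avoid this head-on confrontation by different means: Matsushita \cite{mat3} takes limits of the Lagrangian fibers as cycles in a relative Barlet/Douady space and shows the limit cycles still fiber $X_{t_\infty}$, while Kamenova--Verbitsky \cite{kam-ver} use degenerate twistor deformations to propagate the fibration. Neither runs through base-point-freeness of $|L_{t_\infty}|$ on the limit fiber. As written, your argument establishes openness but not density, so the theorem is not proved.

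A smaller point: in the reduction step, the assertion that $\Def(X,L)_{\reg}$ ``is unaffected'' by replacing $X$ with a birational model $X'$ needs the qualifier that the two universal families are only identified away from a countable union of hypersurfaces; this is harmless for the stated conclusion (which allows the complement to lie in such a union) but should be said, since a priori the regular loci of the two families could differ along those hypersurfaces.
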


In light of the SYZ conjecture, it is expected that the locus $\Def(X,L)_{\nef}$ of hyperk\"ahler manifolds which admit an algebraic isotropic nef class is also open and dense in $\Def(X,L)$. The hyperk\"ahler SYZ conjecture is now established for all known deformation classes of hyperk\"ahler manifolds. This is due to \cite[Theorem 1.5]{BM14} and \cite[Theorem 1.3, 6.3]{Mar14} for deformations of Hilbert schemes of points on a $K3$ surface, to \cite[Prop. 3.38]{Yos16} for deformations of generalized Kummer manifolds, \cite[Cor. 1.3, 7.3]{MR21} for deformations of $OG6$, and \cite[Thm 2.2]{MO22} for deformations of $OG10$. In the case of general polarised hyperk\"ahler manifolds, we show this expected openness:

\begin{theoremi}\label{thm: openness of nef}
Let $X$ be a hyperk\"ahler manifold and $L$ an isotropic nef line bundle on $X$.
    The locus $\Def(X,L)\setminus \Def(X,L)_{\rm nef}$ is a countable union of hypersurfaces in $\Def(X,L).$ Moreover, if, in addition, $X$ is a polarised hyperk\"ahler manifold with ample line bundle $H$, then $\Def(X, L,H)_{\rm nef}$ is open and dense in $\Def(X, L, H)$.
\end{theoremi}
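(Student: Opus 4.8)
The plan is to detect failure of nefness of $L$ on the fibres by a wall condition in the positive cone, and then to run two essentially independent arguments: a countability argument (giving the description of the non-nef locus, and, via Baire's theorem, density in the polarised case) and a compactness argument (giving openness). First recall that for a hyperk\"ahler manifold $Y$ the nef cone equals the closure of the K\"ahler cone $\mathcal K_Y$, and by the work of Huybrechts, Boucksom and Amerik--Verbitsky $\mathcal K_Y$ is the connected component, containing the ample classes, of the positive cone $\{x\in\NS(Y)_{\bbR}:q(x)>0\}$ with the hyperplanes $z^{\perp}$ removed, $z$ ranging over the MBM classes of $Y$; moreover the MBM property is invariant under deformations keeping $z$ of type $(1,1)$, and there is a constant $C$ depending only on the deformation type with $-C\le q(z)<0$ for every MBM class $z$. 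Since $q(L)=0$ and $L\ne 0$, the class $L$ always lies on the boundary of the positive cone of the fibre, so $L_t$ fails to be nef exactly when some MBM wall $z^{\perp}$ on $X_t$ strictly separates $L$ from $\mathcal K_{X_t}$. In the polarised case (after shrinking $\Def(X,L,H)$ so that $H$ stays ample, hence of type $(1,1)$, throughout) the segment joining $L$ and $H$ remains in the closed positive cone and meets its interior, and since $q$ is bilinear, $z^{\perp}$ crosses this segment precisely when $q(L,z)$ and $q(H,z)$ have opposite signs; noting also $q(L,H)>0$ (as $H$ is ample and $L\ne 0$ is nef), one obtains the clean criterion that $L_t$ is \emph{not} nef if and only if there is an MBM class $z\in\NS(X_t)$ with $q(L,z)\,q(H,z)<0$. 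Over $\Def(X,L)$ one uses the analogous criterion with $H$ replaced by a K\"ahler class of $X_t$.

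For the first assertion and for density, set $V_z:=\{t: q(\sigma_t,z)=0\}=\{t: z\in\NS(X_t)\}$ for $z\in H^2(X,\bbZ)$; this is an analytic subset, and a proper hypersurface unless $z$ is of type $(1,1)$ at the very general point, i.e. unless $z\in\bbQ L$ (over $\Def(X,L)$) or $z\in\langle L,H\rangle_{\bbQ}$ (over $\Def(X,L,H)$). An MBM class $z$ has $q(z)<0$, so $z\notin\bbQ L$; and if $z\in\langle L,H\rangle_{\bbQ}$ then $z$ is of type $(1,1)$, hence MBM, on all of $\Def(X,L,H)$, which by the criterion above would make $L$ non-nef even on the central fibre, contradicting the hypothesis. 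As the set of integral classes of negative square is countable, the non-nef locus is contained in a countable union of proper analytic hypersurfaces; being a complex manifold, $\Def(X,L,H)$ is a Baire space, so $\Def(X,L,H)_{\nef}$ is dense. That the non-nef locus in $\Def(X,L)$ is in fact a countable union of hypersurfaces follows because along a dense open subset of each relevant $V_z$ the N\'eron--Severi lattice is constant, equal to $\langle L,z\rangle$, and whether $z^{\perp}$ separates $L$ from the K\"ahler chamber is a lattice condition independent of $t$; thus each such $V_z$ lies, up to lower-dimensional strata absorbed by other $V_{z'}$, in the non-nef locus.

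The crux is openness in the polarised case, nefness being a priori a closed condition. Fix $t_0$ with $L_{t_0}$ nef; I would show the non-nef locus does not accumulate at $t_0$. Suppose $t_i\to t_0$ with $L_{t_i}$ non-nef, and pick MBM witnesses $z_i\in\NS(X_{t_i})$ with, after orienting, $q(L,z_i)<0<q(H,z_i)$. The key point is that $H^{1,1}(X_t)_{\bbR}\cap\langle L,H\rangle^{\perp}$ is negative definite for every $t$: since $q(H)>0$, the class $H$ accounts for the unique positive direction of $H^{1,1}(X_t)_{\bbR}$, and cutting further by $H^{\perp}\cap L^{\perp}$ leaves a negative-definite space. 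Write $z_i=z_i'+v_i$ with $v_i\in\langle L,H\rangle_{\bbQ}$ and $z_i'\in\langle L,H\rangle^{\perp}_{\bbQ}$; then $z_i'$ lies in the negative-definite space above and is nonzero, the case $z_i\in\langle L,H\rangle_{\bbQ}$ having been excluded. The identity $q(z_i')=q(z_i)-q(v_i)$ rewrites $q(z_i')$ as $q(z_i)$ plus an explicit strictly positive expression in $q(L,z_i)$ and $q(H,z_i)$; combined with $q(z_i')<0$ and $q(z_i)\ge -C$, this bounds $|q(L,z_i)|$ and $q(H,z_i)$, hence $v_i$ and $q(z_i')$, to finitely many values. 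If $|z_i'|\to\infty$ along a subsequence, then $z_i'/|z_i'|$ converges to a nonzero isotropic class in $H^{1,1}(X_{t_0})_{\bbR}\cap\langle L,H\rangle^{\perp}$, contradicting negative definiteness; hence the $z_i$ are bounded and, along a subsequence, $z_i\equiv z$ is constant. Then $q(\sigma_{t_0},z)=\lim q(\sigma_{t_i},z)=0$, so $z\in\NS(X_{t_0})$; by deformation invariance $z$ is MBM on $X_{t_0}$, and $q(L,z)\,q(H,z)<0$ forces $L_{t_0}$ to be non-nef, a contradiction. Therefore $\Def(X,L,H)_{\nef}$ is open, and together with density this proves the theorem.

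I expect the openness step to be the main obstacle: it requires stating the nefness criterion for a class on the null cone correctly, and then extracting the finiteness it needs, which genuinely uses both the boundedness of squares of MBM classes and the negative-definiteness of $H^{1,1}(X_t)_{\bbR}\cap\langle L,H\rangle^{\perp}$ — it is this definiteness that converts ``bounded square and bounded pairings with $L$ and $H$, with periods converging to $\sigma_{t_0}$'' into a genuine finiteness of possible witnesses.
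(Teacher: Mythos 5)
Your proposal is essentially correct, but the heart of it --- the openness step --- takes a genuinely different route from the paper. The paper proves openness by working on the polarised period domain $\Omega_h$: it invokes the properly discontinuous action of $\Mon(\Lambda,h)$ on $\Omega_h$ (signature $(2,*)$) together with Amerik--Verbitsky's theorem that the monodromy group acts on the set of wall divisors with finitely many orbits, to conclude that the union of all relevant walls $g(v)^\perp$ is \emph{closed}, so its complement is open. You instead run a direct compactness argument on a sequence of MBM witnesses $z_i$: decomposing $z_i=z_i'+v_i$ orthogonally with respect to $\langle L,H\rangle$, using the uniform bound $q(z_i)\geq -C$ together with the negative definiteness of $H^{1,1}(X_t)_{\bbR}\cap\langle L,H\rangle^{\perp}$ (which holds since $\langle L,H\rangle$ has signature $(1,1)$) to bound $q(L,z_i)$, $q(H,z_i)$ and then $\|z_i'\|$, and extracting a constant subsequence whose limit contradicts nefness at $t_0$. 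I checked the signs: $q(v_i)=2q(L,z_i)q(H,z_i)/q(L,H)-q(L,z_i)^2q(H)/q(L,H)^2<0$ under your orientation, so $q(z_i')=q(z_i)-q(v_i)$ does gain a positive term as you claim, and integrality of $q(L,z_i)$ (up to bounded denominators) gives the lower bound on $|q(L,z_i)|$ needed to bound $q(H,z_i)$. Your argument avoids the finiteness of monodromy orbits entirely, using only the bound on squares of MBM classes, their deformation invariance, and the wall description of the K\"ahler cone; this is more elementary and self-contained than the paper's, at the cost of being a pointwise sequential argument rather than exhibiting a closed bad locus globally.

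The one thin spot is in the first assertion (the non-nef locus in $\Def(X,L)$ \emph{equals} a countable union of hypersurfaces, not merely being contained in one). You assert that on the locus of $V_z$ where $\NS$ is constant, ``whether $z^{\perp}$ separates $L$ from the K\"ahler chamber is a lattice condition independent of $t$.'' This is not automatic: the K\"ahler chamber lives in $H^{1,1}(X_t)_{\bbR}$, which varies with $t$, and what must be ruled out is that the orientation of $z$ (which of $\pm z$ pairs positively with K\"ahler classes) flips as $t$ moves along $V_z$. The paper closes this by citing that a positive multiple of the extremal class $D_t$ is represented by an effective (rational) curve which deforms over the entire locus where $D_t$ stays of type $(1,1)$, so that $L_{t'}\cdot C_{t'}=k\,q_X(L_t,D_t)<0$ persists on all of $v^{\perp}(\Def(X,L))$. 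Your statement is correct but needs this curve-deformation input (or an equivalent continuity-of-orientation argument) to be a proof; as written it is an assertion rather than an argument. Everything else --- the nefness criterion $q(L,z)q(H,z)<0$ with $H$ ample after shrinking, the exclusion of $z\in\langle L,H\rangle_{\bbQ}$, and the Baire-category density --- is sound and matches the paper in substance.
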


 The countable union of hypersurfaces in the complement of the locus $\Def(X,L)_{\rm nef}$, as mentioned in Theorem \ref{thm: openness of nef}, is in fact induced by a countable union of the so-called \textit{wall divisors} (see Definition \ref{def:walldivisor}). We use the fact that on the polarised period domain, the union of \textit{all} hypersurfaces induced by these wall divisors is closed. This was shown by Amerik and Verbitsky. Roughly speaking, the closedness follows from the properly discontinous action of the group of isometries of the lattice on the period domain of polarised marked hyperk\"ahler manifolds. Our proof critically uses this - see \S \ref{sec:opennessnef}.

 Let us finally note that questions about openness of the nef locus have been addressed in the literature  beyond the hyperk\"ahler setting \cite[Theorem 1.1]{Moriwaki}. As a consequence of Theorem \ref{thm: openness of nef} above, given a family of suitably polarised hyperk\"ahler manifolds with an isotropic line bundle $L$ on it, Moriwaki's result \cite{Moriwaki} gives a bound on the minimum degree of the wall divisor that makes $L$ non-nef (see Corollary \ref{cor:moriwaki} for a precise statement).
 
\section{Density of isotropic classes}\label{sec: dense isotrop}

%.
Throughout, we denote by $(S, \bV, \cF^\bullet, \nabla, q)$ a variation of Hodge structures (VHS), with a bilinear form $q$, over a connected complex manifold $S$. Here $\bV$ is a local system of finite dimensional $\bQ$-vector spaces with a Hodge filtration $\cF^\bullet$ of the holomorphic bundle $\cV:=\bV\otimes \cO_S$, and $\nabla\colon\cV\rightarrow \cV\otimes \Omega_S^1$ is the Gauss--Manin connection. The form $q$ is a flat bilinear form  $\bV \otimes \bV \ra \bQ_S$ whose restriction to each fiber $V_s$ of $\bV$ at a point $s\in S$ is non-degenerate on $V_s$.

\begin{definition}
    A finite dimensional vector space $V_{\bbQ}$ with a quadratic form $q$ is of K3-type if the signature of $q$ is $(3, \dim V_{\bbR} - 3)$ with $\dim V_{\bbQ}\geq 4$.
    We say that a VHS $(S, \bV, \cF^\bullet, \nabla, q)$ is of \textsl{$K3$-type} if it is a variation of weight 2 Hodge structures with $\rank \sF^2=1$ and each fiber is a K3-type vector space. 
\end{definition}

Given a K3-type vector space $(V_{\bbQ}, q)$, the period domain of K3-type Hodge structures on $(V_{\bbQ},q)$ is defined as
\[\Omega\coloneqq \{\sigma\in\bbP(V_\bbC)\mid q(\sigma, \overline{\sigma}) = 0, q(\sigma)>0\}.\]

Let $(S, \bV, \cF^\bullet, \nabla, q)$ be a VHS of K3-type  with a trivialisation $\varphi\colon \bbV\overset{\sim}{\to} V_{\bbQ}\otimes \bbQ_S$.  
We have the holomorphic period map:\[\sP\colon S\to \Omega\]
defined by $t\mapsto \varphi_t(\sF^2_t)$. We say that a VHS of K3-type is \textit{\textbf{non-isotrivial}} if the holomorphic period map $\sP$ is non-constant.

 For a class $\alpha\in V_{\bbQ}$, we denote by $\alpha^{\perp}(S)\subset S$ the locus:
 \[\alpha^\perp(S):=\{t\in S \mid \alpha\in \sF^1_t\}.\] Note that  $\alpha^\perp(S)=\{t\in S\mid \alpha \in V^{1,1}_t\}$, since $\alpha$ is equal to its complex conjugate.

 We let $Q:=\{x\mid q(x)=0\}\subset V_\bC$ be the quadric cone defined by the form $q$; we will assume that $Q$ has a nonzero rational point. Note that, if $\rank (\bV) \geq 5$, this is automatically satisfied by Meyer's theorem.

\begin{theorem}\label{thm: isot dense}
Let $(S, \bV, \cF^\bullet, \nabla, q)$ be a non-isotrivial VHS of $K3$ type with trivialisation $\varphi\colon \bV\overset{\sim}{\to}V_\bQ\otimes \bQ_S$ as above. Assume that the quadric cone $Q$ has a nonzero rational point.  
Then the union
\[
S_{\rm isot}:=\bigcup_{\substack{\alpha\in V_{\bbQ}\\ \alpha\neq 0 \\q(\alpha) =0}} \alpha^{\perp}(S)
\]
is dense in $S$ in the analytic topology.
\end{theorem}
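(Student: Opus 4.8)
The plan is to adapt the classical density argument for Noether--Lefschetz loci (Oguiso \cite[Theorem 1.1]{Oguiso}, resting on Green's infinitesimal criterion \cite[Proposition 5.20]{VoisinHTtwo}), the new point being that every approximation must be carried out \emph{inside} the quadric cone $Q$ so that the classes produced are isotropic. As a first reduction: since $\sP$ is non-constant on the connected manifold $S$, the analytic subset $Z:=\{t\in S\mid d\sP_t=0\}$ is proper, so $S\setminus Z$ is dense and open, and since $\overline{S_{\rm isot}}$ is closed it suffices to prove $S\setminus Z\subseteq\overline{S_{\rm isot}}$. Fix $0\in S\setminus Z$, choose $v\in T_0S$ with $d\sP_0(v)\neq 0$, a holomorphic disc $\gamma\colon\Delta\to S$ with $\gamma(0)=0$ and $\gamma'(0)=v$, and a local holomorphic frame $\sigma_t\in V_\bbC$ of the line $\sF^2_{\gamma(t)}$; then $q(\sigma_0)=0$, $q(\sigma_0,\overline{\sigma_0})>0$ and $\sigma_0':=\tfrac{d}{dt}\big|_0\sigma_t\notin\bbC\sigma_0$. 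Recalling that for $\alpha\in V_\bbQ$ the Hodge--Riemann relations give $\alpha^\perp(S)=\{t\mid q(\sigma_t,\alpha)=0\}$, it is now enough to produce a nonzero $\alpha\in V_\bbQ$ with $q(\alpha)=0$ and a sequence $t_k\to 0$ in $\Delta$ with $q(\sigma_{t_k},\alpha)=0$: then $\gamma(t_k)\in\alpha^\perp(S)\subseteq S_{\rm isot}$ and $\gamma(t_k)\to 0$, so $0\in\overline{S_{\rm isot}}$.

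The heart of the matter is the construction of a suitable real isotropic direction. Write $\sigma_0=x_0+iy_0$ with $x_0,y_0\in V_\bbR$; the relations $q(\sigma_0)=0$ and $q(\sigma_0,\overline{\sigma_0})>0$ say exactly that the real plane $P:=\langle x_0,y_0\rangle_\bbR$ is positive definite, so its $q$-orthogonal complement $P^\perp\subset V_\bbR$ has signature $(1,b_2-3)$, i.e.\ is a nondegenerate indefinite quadratic space of dimension $\geq 2$, and is therefore spanned by its isotropic vectors. I claim some real isotropic $\beta_0\in P^\perp$ satisfies $q(\sigma_0',\beta_0)\neq 0$: otherwise $\sigma_0'$ would be $q$-orthogonal to every isotropic vector of $P^\perp$, hence to all of $P^\perp$, hence $\sigma_0'\in(P^\perp)^\perp\otimes\bbC=\bbC\sigma_0\oplus\bbC\overline{\sigma_0}$; but Griffiths transversality forces $\sigma_0'\in\sF^1_0=\bbC\sigma_0\oplus V^{1,1}_0$, and the Hodge decomposition $V_\bbC=\bbC\sigma_0\oplus V^{1,1}_0\oplus\bbC\overline{\sigma_0}$ then gives $\sigma_0'\in\bbC\sigma_0$, a contradiction. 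Since $\beta_0\in P^\perp$ we have $q(\sigma_0,\beta_0)=q(x_0,\beta_0)+iq(y_0,\beta_0)=0$, so the holomorphic function $g(t):=q(\sigma_t,\beta_0)$ on $\Delta$ vanishes at $0$ with $g'(0)=q(\sigma_0',\beta_0)\neq 0$; after shrinking $\Delta$, the only zero of $g$ in $\overline{\Delta}$ is $0$ and $g$ is nonvanishing on $\partial\Delta$.

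For the approximation step: since $Q$ has a nonzero rational point and $q$ is nondegenerate with $\dim_\bbQ V\geq 4$, the smooth projective quadric $\overline Q:=\{[\alpha]\in\bbP(V_\bbQ)\mid q(\alpha)=0\}$ has a $\bbQ$-point, hence is $\bbQ$-rational (project from that point), so $\overline Q(\bbQ)$ is dense in $\overline Q(\bbR)$ in the Euclidean topology. Thus $[\beta_0]\in\overline Q(\bbR)$ is a limit of classes $[\alpha_k]\in\overline Q(\bbQ)$, which we rescale to nonzero vectors $\alpha_k\in V_\bbQ$ with $q(\alpha_k)=0$. On $\overline{\Delta}$ the holomorphic functions $t\mapsto q(\sigma_t,\alpha_k)$ converge uniformly to $g$, which has a single simple zero at $0$ and no zero on $\partial\Delta$, so by Rouch\'e's theorem each of them has, for $k$ large, a zero in $\Delta$; a routine shrinking of $\Delta$ then yields zeros $t_k\to 0$. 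As explained above, this gives $0\in\overline{S_{\rm isot}}$, and since $0\in S\setminus Z$ was arbitrary, $S_{\rm isot}$ is dense in $S$. The genuinely new obstacle, compared with the Noether--Lefschetz case, is that the approximating class must be chosen \emph{simultaneously} isotropic (so that it can be a limit of rational isotropic classes, which are confined to the real points of $\overline Q$) and transverse to the period disc; this is exactly what the spanning property of the isotropic vectors of the indefinite space $P^\perp$, together with Griffiths transversality, delivers. Everything else — properness of $Z$, $\bbQ$-rationality of a quadric with a rational point, and the Rouch\'e argument — is soft.
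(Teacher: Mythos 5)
Your proof is correct, and it reaches the conclusion by a somewhat different route than the paper. Both arguments rest on the same two pillars: the density of $Q(\bbQ)$ in $Q(\bbR)$ for a quadric with a rational point, and an infinitesimal transversality statement saying that the isotropic directions in $V^{1,1}_s$ cannot all be annihilated by $d\sP$. The paper packages the second pillar as Lemma \ref{lem: submersion}: it forms the incidence variety $Q^1\subset Q\times S$, invokes \cite[Lemma 5.21]{VoisinHTtwo} to show the projection $p\colon Q^1\to Q$ is a submersion at a suitable point (because the complex quadric $Q^1_s\cap V^{1,1}_s$ is not contained in the hyperplane $\Ker(d\sP(v))$), and then pulls back the dense set of rational points of $Q$ through the resulting open map. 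You instead restrict to a disc transverse to the period map, produce a \emph{real} isotropic class $\beta_0\in V^{1,1}_0\cap V_{\bbR}$ with $q(\sigma_0',\beta_0)\neq 0$ by the explicit signature computation on $P^{\perp}$ (using that an indefinite nondegenerate real form is spanned by its isotropic vectors, plus Griffiths transversality to rule out $\sigma_0'\in\bbC\sigma_0\oplus\bbC\overline{\sigma_0}$), and then convert density of $\overline{Q}(\bbQ)$ in $\overline{Q}(\bbR)$ into actual nearby zeros via Rouch\'e. What your version buys: it is self-contained (you reprove the relevant case of Voisin's lemma rather than citing it), it works visibly over $\bbR$ throughout (the paper's passage from openness of $p$ to openness of $p_{\bbR}$ implicitly needs the submersion point to be chosen real, which your signature argument makes explicit), and it cleanly handles the locus $Z=\{d\sP=0\}$ by first restricting to its dense open complement — a point the paper's Lemma \ref{lem: submersion}, as literally stated ``for any $s\in S$'', glosses over, since its proof only applies where the differential of the period map is nonzero. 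What the paper's version buys is uniformity: the open-image argument gives density of the isotropic Noether--Lefschetz locus in a whole neighbourhood $U$ of each point at once, rather than point by point along discs.
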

%\yd{As mentioned in the introduction, once this preprint was distributed Tayou communicated to us his result \cite[Corollary 1.1]{Tayou}.}

\begin{proof}
Consider the subset
\[
Q^1:= \{ (\lambda, s) \mid \lambda\in \sF^1_s, \, q(\lambda) =0\} \subset Q \times S
\]
and denote $p \colon Q^1 \ra Q$ the restriction of the projection to the first factor. By Lemma \ref{lem: submersion} below, for any $s\in S$, the map $p$ is open around some point $(\lambda, s)\in Q^1$.  Let $\pi\colon Q^1\to S$ be the restriction of the second projection. For any $s\in S$, there is an open neighbourhood $U\ni s$ such that $\pi^{-1}(U)\subset Q^1$ contains an open neighbourhood $W$ of $(\lambda, s)$ surjecting onto $U$ where $p|_W\colon W\to Q$ is open.

Note that the induced map on the real points $p_{\bbR}\colon W(\bbR)\to Q(\bbR)$ is also open. 
The sets $Q(\bQ)$ and $Q(\bR)$ are non-empty by assumption. Therefore, since $Q(\bQ)\subset Q(\bR)$ is dense, it follows that the subset $\{ (\alpha, s) \mid \alpha\in \sF^1_s, \alpha\neq 0, q(\alpha)=0 \} \cap W \subset W$ is dense. Therefore its image $\displaystyle\bigcup_{\substack{\alpha\neq 0,\\q(\alpha) =0}} \alpha^{\perp}(U)$ in $U$ is also dense.
\end{proof}
It remains to show that the map $p:Q^1\rightarrow Q$ is a submersion. We do so below: we adapt the proof of \cite[Lemma 5.21]{VoisinHTtwo} to our specific situation.

\begin{lemma}\label{lem: submersion}
For any $s\in S$, there exists $\lambda\in Q^1_s$ such that the map $p \colon Q^1 \ra Q$ is a submersion at $(\lambda, s)$.
\end{lemma}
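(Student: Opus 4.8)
The plan is to make $Q^1$ explicit near a point of the fibre $Q^1_s$, to reduce the submersivity of $p$ at such a point to the non‑vanishing of a single linear functional, and then to produce the required $\lambda$ from the geometry of $q$ restricted to $\sF^1_s$, in the spirit of \cite[Lemma 5.21]{VoisinHTtwo}. First I would fix a neighbourhood $S'\ni s$ over which the line bundle $\sF^2$ admits a nowhere‑zero holomorphic frame $\sigma$. Via the trivialisation $\varphi$ this becomes a holomorphic map $\sigma\colon S'\to V_{\bbC}$ lifting the period map $\sP$, and since $\varphi$ is flat, for $\tau\in T_sS'$ the vector $\nabla_\tau\sigma\in V_{\bbC}$ is just the ordinary derivative of $\sigma$. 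Because the first Hodge--Riemann relation gives $q(\sF^2_t,\sF^1_t)=0$ while $\dim\sF^1_t=\dim V_{\bbQ}-1$, one has $\sF^1_t=\sigma_t^{\perp}$, so inside $V_{\bbC}\times S'$ the subvariety $Q^1$ is cut out by the two holomorphic functions $F(\lambda,t)=q(\lambda,\lambda)$ and $G(\lambda,t)=q(\lambda,\sigma_t)$, while $Q\times S'=\{F=0\}$.

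Next I would compute, at a point $(\lambda,s)\in Q^1$ with $\lambda\neq0$,
\[
dF_{(\lambda,s)}(v,\tau)=2\,q(\lambda,v),\qquad dG_{(\lambda,s)}(v,\tau)=q(v,\sigma_s)+q(\lambda,\nabla_\tau\sigma),
\]
so that $T_{(\lambda,s)}Q^1=\ker dF\cap\ker dG$ and $dp_{(\lambda,s)}$ is the projection $(v,\tau)\mapsto v$ onto $T_\lambda Q=\lambda^{\perp}$. The observation driving the proof is elementary: if the linear functional $\tau\mapsto q(\lambda,\nabla_\tau\sigma)$ on $T_sS$ is \emph{nonzero}, then $dF$ (which involves only $v$) and $dG$ (which involves $\tau$ nontrivially) are linearly independent at $(\lambda,s)$, so $Q^1$ is smooth there of the expected dimension $\dim V_{\bbQ}+\dim S-2$; and moreover for every $v\in\lambda^{\perp}$ one can then solve $q(v,\sigma_s)+q(\lambda,\nabla_\tau\sigma)=0$ for $\tau$, so $dp_{(\lambda,s)}$ is onto. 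Hence it is enough to exhibit $\lambda\in Q^1_s$, $\lambda\neq0$, with $q(\lambda,\nabla_\tau\sigma)\neq0$ for some $\tau\in T_sS$.

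To do that I would set $W_s:=\langle\,\nabla_\tau\sigma:\tau\in T_sS\,\rangle\subseteq V_{\bbC}$; differentiating $q(\sigma_t,\sigma_t)\equiv0$ gives $W_s\subseteq\sigma_s^{\perp}=\sF^1_s$ and $\sigma_s\perp W_s$, so the goal becomes to find $\lambda\in Q^1_s\setminus W_s^{\perp}$. Since $\sP$ is non‑constant and $S$ connected, $\{s\in S: d\sP_s=0\}$ is a proper analytic subset; for $s$ outside it one has $W_s\not\subseteq\bbC\sigma_s$, whence $\sF^1_s\not\subseteq W_s^{\perp}$ (otherwise $W_s\subseteq(\sF^1_s)^{\perp}=\bbC\sigma_s$). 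Thus $\bbC\sigma_s\subseteq\sF^1_s\cap W_s^{\perp}\subsetneq\sF^1_s$. The radical of $q|_{\sF^1_s}$ is exactly $\bbC\sigma_s$, so $q$ induces a non‑degenerate form $\overline q$ on $\ocF^1_s:=\sF^1_s/\bbC\sigma_s$, of dimension $\dim V_{\bbQ}-2\geq2$. Its projectivised isotropic locus in $\bbP(\ocF^1_s)$ is not contained in any hyperplane — an irreducible non‑degenerate quadric when $\dim V_{\bbQ}\geq5$, a pair of distinct points in $\bbP^1$ when $\dim V_{\bbQ}=4$ — whereas the image of $\sF^1_s\cap W_s^{\perp}$ is a proper linear subspace of $\ocF^1_s$, hence sits in a hyperplane. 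So I can choose an isotropic $\overline\lambda\in\ocF^1_s$ outside that image and lift it to $\lambda\in\sF^1_s$ (automatically $q(\lambda,\lambda)=0$ and $\lambda\neq0$); by the previous paragraph $p$ is a submersion at $(\lambda,s)$.

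The main difficulty lies in this last step — guaranteeing that the isotropic quadric of $q|_{\sF^1_s}$ modulo its radical is not swallowed by the hyperplane $W_s^{\perp}$; this is exactly where non‑isotriviality enters, through $W_s\not\subseteq\bbC\sigma_s$, and where one uses that after quotienting by the radical the form still has at least two variables, which holds because the VHS is of K3 type ($\dim V_{\bbQ}\geq4$). The one point glossed over is the locus $\{d\sP_s=0\}$, where this construction fails and indeed no $\lambda$ makes $p$ a submersion in the strict sense; there I would take $\lambda=\sigma_s$ and check instead that $p$ is still \emph{open} at the (now singular) point $(\sigma_s,s)$ — in suitable coordinates $Q^1$ near $(\sigma_s,s)$ is $\{q(w-\alpha u(t))=0\}$ with $u$ a local expression of $\sP$ and $p$ the projection $(\alpha,w,t)\mapsto(\alpha,w)$ — since openness of $p$ is all that the proof of Theorem~\ref{thm: isot dense} uses; equivalently, as the density asserted there is local on $S$, one may simply replace such an $s$ by a nearby point where $d\sP\neq0$.
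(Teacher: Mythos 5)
Your proof is correct and follows essentially the same route as the paper: both reduce submersivity at $(\lambda,s)$ to the non-vanishing of $\tau\mapsto q(\lambda,\nabla_\tau\sigma)$ (you compute this directly where the paper invokes \cite[Lemma 5.21]{VoisinHTtwo}), and both then produce $\lambda$ from the fact that the isotropic quadric of the non-degenerate form on $V^{1,1}_s\cong \sF^1_s/\bC\sigma_s$ is not contained in a hyperplane. Your extra care at points where $d\sP_s=0$ is warranted: the paper's proof asserts that non-isotriviality makes the differential nonzero at the given $s$, whereas it only does so on a dense open subset, and your fix (checking openness of $p$ there directly, or simply restricting to the dense open locus $\{d\sP\neq 0\}$, which suffices for the density claimed in Theorem \ref{thm: isot dense}) is the right way to close that small gap.
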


\begin{proof}

Denote by $\tp \colon \sF^1 \lra V_\bC$
the projection to the first factor.
By \cite[Lemma 5.21]{VoisinHTtwo}, 
the map $\tp$ is a submersion at $(\lambda, s)\in \sF^1$ if the map
\[
\onabla(\lambda) \colon T_{S,s} \ra V_s^{0,2}
\]
induced by the Gauss--Manin connection is surjective. 
The map $\overline{\nabla}$ is also the differential $d\sP$ of the period map $\cP \colon S \ra \bP(V_\bC)$. This is because
\[
\onabla\colon T_{S,s} \lra \Hom (V_s^{1,1}, V_s^{0,2})\] and we have isomorphisms $$\Hom (V_s^{1,1}, V_s^{0,2}) \simeq  \Hom ((V_s^{0,2})^*, (V_s^{1,1})^*) \simeq \Hom (V_s^{2,0}, V_s^{1,1}) \simeq T_{[V_s^{2,0}]}\Omega.$$
Since the VHS $(S,\bV, \sF^{\bullet},\nabla, q)$ is not isotrivial, this differential is not zero and hence there exists $v\in T_{S,s}$ such that $d\sP(v)\colon V_s^{1,1}\to V^{0,2}_s$ is non-trivial, hence surjective. Therefore there exists $\lambda\in V_s^{1,1}$ such that $\onabla(\lambda)$ is surjective.

For a fixed $s$, the quadric $Q^1_s\cap V^{1,1}_s\subset V^{1,1}_s$ is not contained in a hyperplane by the non-degeneracy of the quadratic form. Since the dimension of $V_s^{1,1}$ is at least 2, $Q^1_s\cap V^{1,1}_s$ cannot be contained in the linear subspace $\Ker(d\sP(v))\subset V_s^{1,1}$. Therefore, the map $\tp$ is a submersion at $\lambda\in Q^1_s$. 

Since ${Q^1} = \tp^{-1}(Q)$, we obtain that for any $s\in S$, there exists $\lambda$ such that $p$ is a submersion at $(\lambda, s)$.
\end{proof}
    We can apply Theorem \ref{thm: isot dense} to non-isotrivial families of hyperk\"ahler manifolds to recover Theorem \ref{thm:densityfamily}:
\begin{corollary}\emph{(= Theorem \ref{thm:densityfamily})}
Let $\cX\rightarrow S$ be a non-isotrivial family of hyperk\"ahler manifolds over a complex manifold $S$ with $\dim S>0$ and $b_2(X_t)\geq 5$. Then the union
\[
S_{\rm isot}:=\bigcup_{\substack{\alpha\neq 0,\\q(\alpha) =0}} \alpha^{\perp}(S)
\]
is either empty or dense in $S$ in the analytic topology.
\end{corollary}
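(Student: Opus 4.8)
The plan is to deduce this from Theorem \ref{thm: isot dense}, applied to the weight-two variation of Hodge structures on $R^2\pi_*\bQ$ (here $\pi\colon\cX\to S$ denotes the family) equipped with the Beauville--Bogomolov--Fujiki form. We may assume $S$ is connected, treating connected components separately otherwise. Set $\bV:=R^2\pi_*\bQ$, with its Hodge filtration $\cF^\bullet$ and Gauss--Manin connection $\nabla$, and let $q$ be the $\bQ$-valued form on $\bV$ induced fibrewise by the BBF form $q_{X_t}$ on $H^2(X_t,\bZ)$. Since the BBF form is a topological invariant, it is preserved by the monodromy action (Fujiki), so $q$ is flat and fibrewise non-degenerate, and $(S,\bV,\cF^\bullet,\nabla,q)$ is a VHS in the sense of \S\ref{sec: dense isotrop}. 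I would then check that it is of K3-type: it has weight $2$; $\rank\cF^2=h^{2,0}(X_t)=1$ by uniqueness of the holomorphic symplectic form; the signature of $q_{X_t}$ is $(3,b_2-3)$; and $\dim_\bQ V_t=b_2(X_t)\ge 5\ge 4$, the Betti numbers being constant in the family.

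Next I would dispose of the monodromy, which may be nontrivial, so that a global trivialisation $\varphi$ need not exist. Around each point $s\in S$ choose a simply connected open neighbourhood $U\ni s$; then $\bV|_U$ admits a trivialisation, and the restricted VHS over $U$ is again of K3-type. It is non-isotrivial: by the local Torelli theorem for hyperk\"ahler manifolds, non-isotriviality of $\cX\to S$ forces the period map to be non-constant, and a holomorphic map that is non-constant on a connected manifold is non-constant on every nonempty open subset by the identity principle. Moreover, since $\rank\bV=b_2\ge 5$ and $q$ is indefinite, Meyer's theorem provides a nonzero rational point on the quadric cone $Q$. Thus Theorem \ref{thm: isot dense} applies over $U$ and gives that $\bigcup_{\alpha\in V_\bQ,\,\alpha\ne 0,\,q(\alpha)=0}\alpha^\perp(U)$ is dense in $U$.

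Finally I would translate back. For $t\in U$, lying in some $\alpha^\perp(U)$ with $\alpha\ne 0$ isotropic is equivalent to $X_t$ admitting a nonzero rational isotropic class of type $(1,1)$, equivalently (after clearing denominators) a class $\beta\in H^{1,1}(X_t,\bZ)$ with $\beta\ne 0$ and $q_{X_t}(\beta)=0$; this condition is intrinsic to $X_t$ and independent of the chosen trivialisation. Hence $S_{\rm isot}\cap U$ is dense in $U$, and since such $U$ cover $S$, $S_{\rm isot}$ is dense in $S$; in fact Meyer's theorem already makes the union nonempty, so the ``empty'' alternative does not actually occur under the hypothesis $b_2\ge 5$. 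All the genuine content lies in Theorem \ref{thm: isot dense} and Lemma \ref{lem: submersion}; in the reduction itself the only points needing care --- and hence the ``main obstacle'', such as it is --- are that $q$ is genuinely flat (Fujiki invariance of the BBF form), that passing to simply connected opens does not destroy non-isotriviality (identity principle together with local Torelli), and that $Q$ has a rational point (Meyer's theorem, using $b_2\ge 5$).
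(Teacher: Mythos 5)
Your proposal is correct and takes essentially the same route as the paper, which gives no explicit proof of the corollary beyond stating that it follows by applying Theorem \ref{thm: isot dense} to the K3-type VHS on $R^2\pi_*\bQ$ equipped with the Beauville--Bogomolov--Fujiki form --- exactly the reduction you carry out. The points you flag (flatness of $q$, passing to simply connected opens to obtain a trivialisation, preservation of non-isotriviality via local Torelli and the identity principle, and Meyer's theorem for the rational point on $Q$) are precisely the details the paper leaves implicit.
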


\begin{remark}
   One could also apply Theorem \ref{thm: isot dense} to the Fano variety $F(X)$ of lines associated to a cubic 4-fold $X$. The Fano variety $F(X)$ is a projective hyperk\"ahler manifold of K3$^{[2]}$-type. By \cite[Corollary 5.24]{Huycubic}, if $F(X)$ admits a rational Lagrangian fibration, then the cubic fourfold $X$ lies on a special divisor $\scrC_d$ of the moduli of cubic fourfolds, indexed by an integer $d$ where $d/2$ is a perfect square. Said differently, Theorem \ref{thm: isot dense} implies that any \textit{non-isotrivial }family of cubic fourfolds contains a dense subset of points from $\scrC_d$ where $d/2$ is a perfect square.
\end{remark}

\begin{question}\label{rmk:failure}
    In the light of the SYZ conjecture (see Conjecture \ref{conj: SYZ1}), one may wonder whether, given a non-isotrivial family of hyperk\"ahler manifolds $\cX\to S$, the locus of algebraic isotropic nef classes is also dense. 
\end{question}

\begin{remark}
    Notice that the period domain $\Omega$ contains smooth compact Riemann surfaces of any genus, however it does not contain compact complex subspaces of dimension greater than one (see the discussion in \cite[Sections 5.6 and 5.7]{GrebSchwald}). Therefore,    
    the image $\sP(S)$ of the complex manifold $S$ inside the period domain $\Omega$ is either non-compact of any dimension, or compact of dimension one. 
\end{remark}

\section{Density of nef classes and Lagrangian fibrations}\label{sec:opennessnef}

Throughout this section, we consider a marked hyperk\"ahler manifold $X$ of a fixed deformation type.  We denote by $\varphi:(H^2(X,\bZ), q_X)\overset{\sim}{\to} \Lambda$ the marking, i.e.,\ $\varphi$ is an isometry of integral lattices. Let $L\in H^{1,1}(X,\bZ)$ be an isotropic (i.e., $q_X(L)=0$) primitive line bundle  and let $\Def(X,L)$ be the hypersurface where $L$ remains algebraic\footnote{Here we really mean the locus of $t\in \Def(X)$ where the class $\varphi_t^{-1}(\varphi(L))$ remains algebraic.} in the local Kuranishi space $\Def(X)$. Let $(\cX,\cL)\rightarrow \Def(X, L)$ be the universal family, together with a line bundle $\cL$ such that $(\cX,\cL)_{0}=(X,L)$. Note that the marking $\varphi$ of $X$ naturally defines markings $\varphi_t$ of the fibers $\cX_t$, and thus we can consider the period map $$\cP\colon \Def(X)\rightarrow \Omega,$$ where $\Omega:=\{\sigma\in \bP(\Lambda\otimes \bC)\mid(\sigma\cdot\overline{\sigma})=0,\sigma^2>0\},$ as in \S \ref{sec: dense isotrop}. It follows that:
$$\Def(X,L)=\cP^{-1}(\varphi(L)^\perp),$$ with the notation of \S \ref{sec: dense isotrop}.

We will also consider the polarised case: if, in addition, $X$ admits an ample class $H$, we consider the universal family $(\cX, \cL,\sH)\rightarrow \Def(X,L,H)$, together with an ample line bundle $\sH$ such that $(\cX,\cL,\sH)_0=(X,L,H).$ Note that $$\Def(X,L,H):=\Def(X,L)\cap \Def(X,H)\subset \Def(X);$$ this is the locus $t\in \Def(X)$ where the classes $L$ and $ H$ remain algebraic.

The aim of this section is to study the locus $\Def(X,L)_{\rm nef}\subset \Def(X,L)$ of points $t\in \Def(X,L)$ where the isotrivial primitive line bundle $\cL_t$ is nef. According to the SYZ Conjecture \ref{conj: SYZ1}, this is exactly the locus where $\cL_t$ defines a Lagrangian fibration on $X_t,$ and hence is a dense open subset of $\Def(X)$ by \cite[Theorem 1.2]{mat3}. 
We will prove Theorem \ref{thm: openness of nef}:
\begin{theorem}
Let $X$ be a hyperk\"ahler manifold and $L$ an isotropic nef line bundle on $X$.
    The locus $\Def(X,L)\setminus \Def(X,L)_{\rm nef}$ is a countable union of hypersurfaces in $\Def(X,L).$ Moreover, if in addition $X$ is a polarised hyperk\"ahler manifold with ample line bundle $H$, then $\Def(X, L,H)_{\rm nef}$ is open and dense in $\Def(X, L, H)$.
\end{theorem}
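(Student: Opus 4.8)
The plan is to prove the two assertions separately, since the first is an absolute statement about $\Def(X,L)$ and the second is a relative statement in the polarised setting where properness/discontinuity of the isometry action can be exploited.

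\textbf{Step 1: The complement is a countable union of hypersurfaces.} The key input is the notion of \emph{wall divisor}: a primitive class $v\in H^{1,1}(X_t,\bZ)$ with $q_X(v)<0$ whose orthogonal hyperplane bounds a chamber of the positive cone on which the (birational) K\"ahler cone is locally constant. The standard deformation theory of K\"ahler/nef cones (Markman, Amerik--Verbitsky) says that for $t$ in a Zariski-open subset of $\Def(X,L)$, the class $\cL_t$ is nef, and it can fail to be nef at $t$ only if there is a wall divisor $v_t\in H^{1,1}(X_t,\bZ)$ separating $\cL_t$ from the K\"ahler cone, i.e. with $q_X(\cL_t, v_t)=0$ in a suitable limiting sense, so that $t$ lies in $v^\perp(\Def(X,L))$ for one of countably many integral classes $v$ in the fixed lattice $\Lambda$. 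Each such $v^\perp$, when it is proper, is a hypersurface in $\Def(X,L)$ because the period map $\cP$ restricted to $\Def(X,L)$ is a local isomorphism onto its image in $\varphi(L)^\perp$ and the preimage of a hyperplane section of a type IV domain is a divisor; that $v^\perp$ is \emph{proper} (not all of $\Def(X,L)$) follows since $\cL_0=L$ is nef at the origin, so at least one point is not on the wall. Hence $\Def(X,L)\setminus\Def(X,L)_{\rm nef}\subset\bigcup_v v^\perp(\Def(X,L))$ is a countable union of hypersurfaces.

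\textbf{Step 2: Openness and density in the polarised case.} Now fix an ample $H$ and work inside $\Def(X,L,H)=\cP^{-1}(\varphi(L)^\perp\cap\varphi(H)^\perp)$. Openness of $\Def(X,L,H)_{\rm nef}$ is the heart of the matter: one must upgrade ``complement is a countable union of hypersurfaces'' to ``complement is closed.'' This is where the Amerik--Verbitsky result enters — on the period domain of $H$-polarised marked hyperk\"ahler manifolds, the union of \emph{all} the wall-divisor hyperplane sections is \emph{closed}, because the group $\mathrm{Mon}$ (or the relevant arithmetic subgroup of $O(\Lambda)$ fixing $\varphi(H)$) acts properly discontinuously on that domain and the walls are the $\mathrm{Mon}$-orbit of finitely many hyperplanes. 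Pulling this closed set back along $\cP$ (which is an open holomorphic map on $\Def(X,L,H)$, being a local biholomorphism onto an open subset of the polarised period domain) shows $\Def(X,L,H)\setminus\Def(X,L,H)_{\rm nef}$ is closed, hence $\Def(X,L,H)_{\rm nef}$ is open. For density: $\Def(X,L,H)$ is a connected complex manifold, and by Step 1 the bad locus is contained in a countable union of proper analytic hypersurfaces (one checks they remain proper after intersecting with $\Def(X,H)$, using again that $L$ is nef at the base point $0$, so $0\notin$ the bad locus), and a countable union of nowhere-dense analytic subsets of a connected complex manifold has dense complement by the Baire category theorem. So $\Def(X,L,H)_{\rm nef}$ is open and dense.

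\textbf{Expected main obstacle.} The serious point is Step 2's openness: translating the Amerik--Verbitsky closedness statement, which lives on the \emph{polarised period domain with its $O(\Lambda)$-action}, into a statement about the \emph{Kuranishi family} $\Def(X,L,H)$, and being careful that the wall divisors one must exclude are exactly those realised by monodromy-equivalent classes so that the orbit is discrete. One also needs to know that a wall divisor, once it exists at a nearby deformation, genuinely obstructs nefness of $\cL_t$ there (this uses that the K\"ahler cone is cut out by wall divisors — Amerik--Verbitsky, and Bayer--Hassett--Tschinkel / Mongardi for specific deformation types) and conversely that away from all walls $\cL_t$ lies in the closure of the K\"ahler cone, hence is nef. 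The density half is comparatively soft once Step 1 is in place, via Baire.
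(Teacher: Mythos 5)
Your overall strategy coincides with the paper's: wall divisors (equivalently, MBM classes) control nefness; Amerik--Verbitsky's proper discontinuity of $\Mon(\Lambda,h)$ on the polarised period domain, combined with the finiteness of monodromy orbits on wall classes, gives closedness of the union of walls and hence openness; density then follows from Baire. Your Step 2 is essentially the paper's second proposition. The problem is in Step 1, and it is precisely the point you flag as an ``expected obstacle'' without resolving it.

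Two issues. First, the wall condition is misstated: if $\cL_t$ is not nef, the obstruction is a wall divisor $D_t\in H^{1,1}(X_t,\bZ)$ with $D_t^\perp\cap\overline{\sK_{X_t}}\neq\varnothing$ and $q_{X_t}(D_t,\cL_t)<0$ (strict negativity of the pairing), not $q_X(\cL_t,v_t)=0$; and the reason $t$ lies in $v^\perp(\Def(X,L))$ is that $v=\varphi_t(D_t)$ is a Hodge class at $t$ (the Noether--Lefschetz condition defining $v^\perp(\cdot)$ in this paper), not any orthogonality between $L$ and $v$ in the lattice --- you conflate the two. Second, and more seriously, you only establish the inclusion $\Def(X,L)\setminus\Def(X,L)_{\rm nef}\subset\bigcup_v v^\perp(\Def(X,L))$. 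To conclude that the non-nef locus \emph{is} a countable union of hypersurfaces, and to make your Step 2 openness argument function (you need the nef locus to be exactly the complement of the closed union of walls, not merely to contain that complement), you must prove the reverse inclusion: every point of $v^\perp(\Def(X,L))$ with $(v\cdot\varphi(L))<0$ is non-nef. The paper does this by invoking \cite[Prop 1.5]{MonKahCone} to represent a positive multiple $kD_t$ by an extremal rational curve $C\subset X_t$, which deforms along the whole Noether--Lefschetz locus of $D_t$; then $L_{t'}\cdot C_{t'}=k\,q_X(L_t,D_t)<0$ for every $t'$ on that locus, so $L_{t'}$ is not nef there. Without this deformation-of-curves step your argument yields only that the non-nef locus is \emph{contained in} a countable union of hypersurfaces, which suffices for the density claim but not for the first assertion of the theorem nor for openness.
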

 This provides more evidence for the validity of the SYZ Conjectures. Here, $$\Def(X,L,H)_{\rm nef}\coloneqq \{t\in \Def(X,L,H)\mid \cL_t \text{ is nef on } X_t\}.$$

\subsection{Wall Divisors and the K\"ahler cones}
Denote by $\sC_X\subset H^{1,1}(X)\cap H^2(X,\bR)$ the closed positive cone of $X$, i.e., the connected component of 
\[\{x\in H^{1,1}(X)\cap H^2(X,\bR) \mid q_X(x)> 0\}\] that contains a K\"ahler class. Recall that the K\"ahler cone $\sK_X\subset \sC_X$ is the open convex cone of all K\"ahler classes on $X$. The birational K\"ahler cone $\sB\sK_X\subset \sC_X$ is the union of $f^*\sK_{X'}$, where $f:X\dashrightarrow X'$ is a birational map with $X'$ a hyperk\"ahler manifold. An isotropic primitive line bundle $L\in H^{1,1}(X)$ is nef if and only if $L\in \overline{\cK_X}$, and hence we have:
   \[
   \Def(X,L)_{\rm nef}\coloneqq \{t\in \Def(X,L) \mid \cL_t\in \overline{\sK_{X_t}} \}.\]

\begin{definition}\label{def:walldivisor}
    Let $D\in \Pic(X)$ be a primitive divisor, not necessarily effective. Then $D$ is a \textbf{wall divisor} if $q_X(D)<0,$ and $\gamma(D^\perp)\cap \sB\sK_X=\varnothing$ for all $\gamma\in \Mon^2(X)_{\rm Hdg}.$
\end{definition}
Here, $\Mon^2(X)_{\rm Hdg}$ is the subgroup of monodromy operators that preserve the Hodge structure on $H^2(X,\bbC)$ (see \cite[Defn. 1.1]{Markman}), and $D^\perp=\{\alpha\in \overline{\sC_X} \mid q_X(\alpha, D)=0\}.$ Thus a line bundle $L$ is nef on $X$ provided $q_X(L,D)\geq 0$ for all wall divisors $D\in \Pic(X).$ 

 Note that by \cite[Cor 1.6 and discussion after]{AVcone}, for a fixed deformation type, the Beauville-Bogomolov square of a wall divisor $D$ is bounded below, i.e.,\ there exists $N\in \mathbb{N}$ such that $-N\leq q_Y(D)<0$. We thus define:
\[\mathcal{W}:=\{v\in \Lambda \mid -N\leq v^2 <0, v \text{ primitive }\}.\] For any $t\in \Def(X)$ and any wall divisor $D_t\in H^{1,1}(X_t),$ we have $\varphi(D_t)\in \mathcal{W}.$

The notion of wall divisors is equivalent to that of MBM classes \cite{AVrationalcurves}. Indeed, via the BBF form $q_X$, we can embed $H_2(X,\bZ)\subset H^2(X,\bQ)$ as an overlattice of $H^2(X,\bZ)\cong \Lambda$. We can then view classes of curves in $X$ as rational $(1,1)$-classes. Note that if $D\in \Pic(X)$ is a wall divisor which is positive on a K\"ahler class, and satisfies   $D^\perp\cap\overline{\sK_X}\neq\varnothing$, then $D$ has a rational multiple represented by an extremal curve class - an MBM class \cite[Prop 1.5]{MonKahCone}.

\subsection{Proof of Theorem \ref{thm: openness of nef}}
The proof follows from the two propositions below.

\begin{proposition}\label{prop: countable hypersurf}
    In the notation of Theorem \ref{thm: openness of nef}, the locus $\Def(X,L)\setminus \Def(X,L)_{\nef}$ is a countable union of hypersurfaces in $\Def(X,L).$
\end{proposition}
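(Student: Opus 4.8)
The plan is to describe $\Def(X,L)\setminus\Def(X,L)_{\nef}$ as a union, indexed by the finite set $\cW$ of potential wall-divisor classes, of the loci where a given $v\in\cW$ becomes algebraic and "cuts" $\cL$, and then to observe that each such locus is either empty or a hypersurface. Concretely, for $v\in\cW$ set $Z_v:=v^\perp(\Def(X))=\cP^{-1}(\varphi^{-1}(v)^\perp)\subset\Def(X)$, the locus where $\varphi_t^{-1}(v)$ is of type $(1,1)$ (equivalently algebraic, since $\Def(X)$ is simply connected and the class is integral); inside $\Def(X,L)$ this gives a hypersurface, or the whole component, or is empty, by the usual Noether--Lefschetz argument that $v^\perp$ is a hyperplane section of $\Omega$. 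The claim I would extract from the definitions in Section 2 is:
\[
\Def(X,L)\setminus\Def(X,L)_{\nef}\ \subseteq\ \bigcup_{v\in\cW}\bigl(Z_v\cap\Def(X,L)\bigr),
\]
because if $\cL_t$ is not nef then there is a wall divisor $D_t\in\Pic(X_t)$ with $q_{X_t}(\cL_t,D_t)<0$, and $\varphi_t(D_t)\in\cW$ by the boundedness $-N\le q_Y(D)<0$ recalled after Definition~\ref{def:walldivisor}; thus $t\in Z_{\varphi_t(D_t)}$. Conversely one should check that each $Z_v\cap\Def(X,L)$ meeting the non-nef locus is contained in it, or at least that after discarding those components $Z_v$ which lie entirely in $\Def(X,L)$ (there are none, since $L$ is already algebraic and isotropic while $v^2<0$, so $v$ and $L$ are linearly independent and $Z_v$ is a proper hypersurface) the union on the right is exactly the non-nef locus; this is where I would invoke that $\cL$ remains nef in a neighbourhood of a point where no wall class is algebraic, which is precisely the (harder) openness statement—so for Proposition~\ref{prop: countable hypersurf} it suffices to prove the containment above together with the fact that each $Z_v$ is a hypersurface.

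The key steps, in order: (i) recall that $\Def(X,L)=\cP^{-1}(\varphi(L)^\perp)$ and more generally that for a primitive $v\in\Lambda$ the locus $\cP^{-1}(v^\perp)$ is, near any of its points, cut out by the single holomorphic equation $(\sigma,v)=0$ on $\Omega$ pulled back by $\cP$, hence is empty, all of $\Def(X,L)$, or a hypersurface; (ii) using that $L$ is isotropic and every $v\in\cW$ has $v^2<0$, rule out the middle case, so $Z_v\cap\Def(X,L)$ is a genuine hypersurface (or empty) in $\Def(X,L)$; (iii) use the lower bound on Beauville--Bogomolov squares of wall divisors (from \cite{AVcone}) to see that the relevant index set is the \emph{finite} set $\cW$ of primitive classes with $-N\le v^2<0$—wait, $\cW$ is infinite since $\Lambda$ has infinitely many such vectors, so more precisely the index set is the countable set $\cW$, which is why we get a \emph{countable} union of hypersurfaces; (iv) establish the set-theoretic containment displayed above by unwinding the characterisation "$L$ nef $\iff q_X(L,D)\ge 0$ for all wall divisors $D$" recorded after Definition~\ref{def:walldivisor}, noting a wall divisor on $X_t$ is in particular algebraic of type $(1,1)$ with bounded square, hence its $\varphi_t$-image lies in $\cW$.

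The main obstacle is step (iv)—specifically, being careful that a wall divisor $D_t$ on the fiber $X_t$ pulls back under the marking to a class in $\cW$, and that the "cutting" condition $q_{X_t}(\cL_t,D_t)<0$ forces $t\in Z_{\varphi_t(D_t)}$: one must check $\varphi_t(D_t)$ is independent of $t$ as an element of $\Lambda$ (it is, being parallel-transported), that it is of type $(1,1)$ at $t$ (true since $D_t\in\Pic(X_t)$), and that membership in $Z_v$ is the right condition (it is, as $Z_v$ by definition records exactly when $v$ becomes a $(1,1)$-class, which for integral classes on the simply connected $\Def(X)$ is algebraicity by Lefschetz $(1,1)$). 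A secondary subtlety is that Definition~\ref{def:walldivisor} phrases the wall condition in terms of $\Mon^2(X)_{\rm Hdg}$-translates and $\sB\sK_X$ rather than directly in terms of $\overline{\sK_X}$; I would bridge this using the stated consequence "$L$ nef provided $q_X(L,D)\ge 0$ for all wall divisors," which is exactly what is recorded in the text after Definition~\ref{def:walldivisor}, so no extra work on cone geometry is needed here. The density half of Theorem~\ref{thm: openness of nef} and the openness in the polarised case are deferred to Proposition~\ref{prop: countable hypersurf}'s companion, which uses the Amerik--Verbitsky closedness of the union of wall hyperplanes on the polarised period domain.
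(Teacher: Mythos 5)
Your forward containment (step (iv)) matches the paper's opening move: a non-nef $\cL_t$ yields a wall divisor $D_t$ with $q_{X_t}(\cL_t,D_t)<0$, its image $\varphi_t(D_t)$ lands in the countable set $\cW$ by the Amerik--Verbitsky bound, and hence $t$ lies on the corresponding Noether--Lefschetz hypersurface. But there is a genuine gap in how you finish. You assert that ``for Proposition~\ref{prop: countable hypersurf} it suffices to prove the containment above together with the fact that each $Z_v$ is a hypersurface.'' It does not: a set contained in a countable union of hypersurfaces need not itself \emph{be} a countable union of hypersurfaces (an open piece of a hypersurface already fails this). The proposition claims equality, so you must show that each hypersurface $v^\perp(\Def(X,L))$ you keep in the union is \emph{entirely} contained in the non-nef locus. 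You flag this (``one should check that each $Z_v\cap\Def(X,L)$ meeting the non-nef locus is contained in it'') but then propose to invoke the openness of the nef locus, which is both circular here (openness is the content of the companion proposition, proved only in the polarised case and using the present proposition) and in any case does not yield the reverse containment.

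The paper closes this gap with an argument your proposal is missing: by \cite[Prop 1.5]{MonKahCone} the wall divisor $D_t$ cutting the K\"ahler cone has a positive rational multiple $kD_t$ represented by an extremal (rational, MBM) curve $C\subset X_t$, and this curve deforms to every point $t'$ of the locus $v^\perp(\Def(X,L))$ where $D_t$ stays of type $(1,1)$, producing a curve $C_{t'}$ with $L_{t'}\cdot C_{t'}=k\,q_X(L_t,D_t)<0$. Hence $L_{t'}$ is non-nef for \emph{all} $t'\in v^\perp(\Def(X,L))$, i.e.\ $v^\perp(\Def(X,L))\cap\Def(X,L)_{\nef}=\varnothing$, which is exactly the reverse containment needed to conclude that the non-nef locus equals the countable union of hypersurfaces. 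Without this deformation-of-rational-curves step the proof is incomplete. (Two smaller points: your displayed union should be restricted to $v\in\cW$ with $q(v,\varphi(L))<0$, as in the paper; and your parenthetical ruling out ``components $Z_v$ which lie entirely in $\Def(X,L)$'' is worded backwards --- the case to exclude is $\Def(X,L)\subseteq Z_v$ --- though the underlying point that $Z_v\cap\Def(X,L)$ is a proper analytic subset is correct.)
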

    \begin{proof}
    Let $t\in \Def(X,L)\setminus \Def(X,L)_{\nef}$. Since $L_t\coloneqq \sL|_{X_t}$ is not nef on $X_t$, there exists a wall divisor $D_t\in \Pic(X_t)$ such that $D_t^\perp\cap\overline{\sK_{X_t}}\neq\varnothing$, i.e.,\ the wall $D_t^\perp$ is a wall of the K\"ahler cone, and $q_{X_t}(D_t, L_t)<0$. 
    Then, by \cite[Prop 1.5]{MonKahCone}, the class $D_t\in H_2(X_t,\bZ)$ generates an extremal ray of the Mori cone (the closed convex cone generated by the classes of effective curves), and some positive rational multiple $kD_t$ is represented by an effective curve $C\subset X_t.$ 
    By the Cone Theorem, these effective curves are rational when $X_t$ is projective; in the non-projective case, it was observed in \cite{AVrationalcurves}, that these extremal curces are the so-called MBM curves and are rational.
    
     Let $v:=\varphi_t(D_t)\in \cW$. Recall, in the notation of \S \ref{sec: dense isotrop}, that
     $$v^\perp(\Def(X,L)):=\{s\in \Def(X,L) \mid \varphi_s^{-1}(v)\in H^{1,1}(X_s)\cap H^2(X,\bZ)\}.$$
     Consider $t'\in v^\perp (\Def(X,L))$. Since the rational curve $C$ deforms exactly to the locus where $D_t$ remains of type $(1,1)$, it follows that there is a curve $C_{t'}$ (possibly non integral) on $X_{t'}$ such that  $$L_{t'}\cdot C_{t'}=k q_{X}(L_t, D_t)<0$$ (see \cite[Discussion after Defn 2.10]{AVrationalcurves}). It follows that  $L_{t'}$ is not nef on $X_{t'}$, and $v^\perp(\Def(X,L))\cap \Def(L,X)_{\nef}=\varnothing.$ Thus 
     $$\Def(X,L)\setminus \Def(X,L)_{\nef}=\bigcup\limits_{\substack{(v\cdot \varphi(L))<0\\ v\in \mathcal{W}}}v^{\perp}(\Def(X,L)),$$ a union of hypersurfaces. Since $\sW$ is countable, the union is also countable.
\end{proof}

\begin{proposition}
    Let $X$ be a polarised hyperk\"ahler manifold with ample bundle $H$. Then $\Def(X,L,H)_{\nef}$ is open and dense in $\Def(X,L,H).$
\end{proposition}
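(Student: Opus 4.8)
The plan is to combine Proposition \ref{prop: countable hypersurf} (which handles the unpolarised case and shows the complement of the nef locus is a countable union of hypersurfaces, each of the form $v^\perp$ for $v \in \mathcal{W}$) with the closedness result of Amerik--Verbitsky on the polarised period domain. First I would set up the polarised period map $\cP\colon \Def(X,L,H) \to \Omega_H$, where $\Omega_H \coloneqq \{\sigma \in \bP(\Lambda\otimes\bC) \mid (\sigma\cdot\overline\sigma)=0,\ \sigma^2>0,\ (\sigma\cdot\varphi(H))=0\}$ is the period domain of $H$-polarised marked hyperk\"ahler manifolds; by the local Torelli theorem $\cP$ is a local isomorphism onto an open subset of $\Omega_H\cap \varphi(L)^\perp$ (the condition that $L$ stay of type $(1,1)$ cuts out the hyperplane section $\varphi(L)^\perp$). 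Openness of $\Def(X,L,H)_{\nef}$ then follows if I can show that its complement is \emph{closed}, which by Proposition \ref{prop: countable hypersurf} means showing that the countable union $\bigcup_{v} v^\perp(\Def(X,L,H))$, over $v\in\mathcal{W}$ with $(v\cdot\varphi(L))<0$, is closed in $\Def(X,L,H)$.

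The key input here is the Amerik--Verbitsky theorem that on the polarised period domain the union of \emph{all} wall-divisor hyperplanes $\bigcup_{v\in\mathcal{W}} v^\perp$ is closed (equivalently, locally finite), a consequence of the proper discontinuity of the action of the relevant arithmetic group on $\Omega_H$ together with the boundedness $-N\le v^2<0$. Concretely: given any point $p\in\Omega_H$, there is a neighbourhood meeting only finitely many of the hyperplanes $v^\perp$. Pulling back along the local isomorphism $\cP$, every point of $\Def(X,L,H)$ has a neighbourhood meeting only finitely many of the hypersurfaces $v^\perp(\Def(X,L,H))$; a \emph{finite} union of analytic hypersurfaces is closed, so the locally finite countable union is closed, and hence $\Def(X,L,H)_{\nef}$ is open.

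For density, I would argue that the open set $\Def(X,L,H)_{\nef}$ is nonempty and that its complement, being a countable union of proper analytic hypersurfaces (each $v^\perp(\Def(X,L,H))$ is a proper closed analytic subset, as $L$ itself is nef so $X=\cX_0$ does not lie on any of them for the relevant $v$), cannot contain any open set; by the Baire category theorem (or directly, since a nonempty open set in a connected complex manifold is not contained in a countable union of proper analytic subsets) the complement of the nef locus has empty interior, so $\Def(X,L,H)_{\nef}$ is dense. Nonemptiness is immediate: $0\in\Def(X,L,H)$ and $L$ is nef on $X$ by hypothesis, so $0$ lies in the nef locus.

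The main obstacle is the closedness step, i.e.\ correctly invoking and applying the Amerik--Verbitsky local finiteness of the wall-divisor hyperplane arrangement on the \emph{polarised} period domain --- this is exactly where polarisation is essential, since on the full (unpolarised) period domain the arrangement $\bigcup_v v^\perp$ is dense rather than closed, which is why Theorem \ref{thm: openness of nef} only asserts openness, not density, in the unpolarised situation. One must be careful that the period domain in question is the one for which the arithmetic group $\mathrm{O}(\Lambda, \varphi(H))$ (stabiliser of the polarisation, acting on $\Omega_H$) acts properly discontinuously, and that passing to the further hyperplane section $\varphi(L)^\perp$ does not disturb local finiteness --- but intersecting a locally finite family with a fixed closed subset stays locally finite, so this is routine once the correct domain is identified.
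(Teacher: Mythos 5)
Your proposal is correct and follows essentially the same route as the paper: decompose the complement of the nef locus via Proposition \ref{prop: countable hypersurf} into hyperplane sections $v^{\perp}$ with $v\in\cW$, then use the Amerik--Verbitsky input (proper discontinuity of the stabiliser of the polarisation on the signature-$(2,*)$ period domain $\Omega_h$, together with finiteness of monodromy orbits on wall divisors) to conclude that the relevant union of walls is closed, hence the nef locus is open; density then follows because a countable union of proper analytic hypersurfaces has empty interior. The only cosmetic difference is that you phrase the closedness as local finiteness of the wall arrangement and pull it back along the period map, whereas the paper works directly with the closed set $\bigcup_{g,v} g(v)^{\perp}$ in $\Omega_h$ --- these are the same argument.
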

\begin{proof}
   By \Cref{prop: countable hypersurf} we see that 
    \[\Def(X,L,H)\setminus \Def(X,L,H)_{\nef} = \bigcup\limits_{\substack{(v\cdot \varphi(L))<0\\ v\in \mathcal{W}}}v^{\perp}(\Def(X,L,H)).\]
    We will show that, for any $t\in \Def(X,L,H)_{\nef}$, there is an open subset $U$ containing $t$ such that $U\subset \Def(X,L,H)_{\nef}$. 
 
    Put $h:=\varphi_t(H)$, and denote by $\Omega_h\coloneqq \Omega\cap \bbP(h^{\perp}\otimes\bbC)$, the associated period domain. We denote by $\Mon(\Lambda, h)\coloneqq \{g\in \Mon(X_t)\mid g(h) = h\}$.  
    Since the lattice $h^{\perp}\subset \Lambda$ has signature $(2, *)$, by \cite[Remark 6.1.10]{HuyK3} the group $\Mon(\Lambda, h)$ acts properly discontinuously on $\Omega_h$ and hence for any $v\in h^\perp$, the union $\bigcup_{g\in \Mon(\Lambda, h)} g(v)^{\perp}$ is a proper closed subset. By \cite[Theorem 3.7]{AVrationalcurves}, the monodromy group $\Mon(\Lambda, h)$ acts on the set of wall divisors $\cW$ with finitely many orbits. Therefore the set 
    \[\bigcup\limits_{\substack{g\in \Mon(\Lambda, h)\\ v^2<0 \\ q(v, \varphi_t(L_t))<0}} g(v)^{\perp}\]
    is closed in $\Omega_h$. The complement $U$ of this set in $\Def(X,L, H)$ is an open set containing $t$. Further, $U$ is contained in $\Def(X,L,H)_{\nef}$ since, by the universality of the Kuranishi space, we have $\Def(X_t,L_t,H_t)= \Def(X,L,H)$ for $t$ sufficienly close to $0$. 
     
     The density follows immediately since the complement of $\Def(X,L,H)_{\nef}$ is a countable union of complex analytic codimension one subsets. 
\end{proof}

As a consequence of the theorem above, the equivalent criterion for such openness due to Moriwaki \cite[Theorem 1]{Moriwaki} implies the following: 
\begin{corollary}\label{cor:moriwaki}
     Let $\sX\to S$ be a non-isotrivial family  of marked hyperk\"ahler manifolds with a fixed polarization $H$ over a smooth base scheme $S$. Suppose that $\sL$ is a line bundle on $\sX$ such that under the marking we have $\varphi_s(\sL_s)=\alpha$ for a fixed  $\alpha\in \Lambda$. Define
  \[\sigma_s\coloneqq \min\{q_{X_s}(H,D)\mid  D\text{ is a wall divisor, }  D^\perp\cap \overline{\sK}_{X_s}\neq \varnothing \text{ and }q_{X_s}(D,\alpha)< 0\}.\] Then there exists an integer $N$, such that for all $s\in S$, $\sigma_s< N$.
 \end{corollary}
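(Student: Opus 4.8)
The plan is to derive the bound from the openness of the nef locus established in \Cref{thm: openness of nef}, via Moriwaki's numerical characterisation of that openness; since this is a corollary, I would keep the argument short.

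First I would observe that the nef locus $S_{\nef}:=\{s\in S\mid \sL_s\text{ is nef on }X_s\}$ is open. Because hyperk\"ahler manifolds satisfy $H^0(X_s,T_{X_s})=0$, the Kuranishi family is universal, so near any $s_0\in S$ there is a classifying morphism from a neighbourhood of $s_0$ to $\Def(X_{s_0},\sL_{s_0},H)$ pulling back the universal family; locally $S_{\nef}$ is then the preimage of $\Def(X_{s_0},\sL_{s_0},H)_{\nef}$, which is open by \Cref{thm: openness of nef}. Hence $S_{\nef}$ is open in $S$. (This is not automatic: for algebraic $S$ the complement of $S_{\nef}$ is a priori only an increasing countable union of closed subsets, so \Cref{thm: openness of nef} is genuinely needed here.)

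Next I would invoke Moriwaki's criterion \cite[Theorem 1]{Moriwaki} --- the ``equivalent criterion for such openness'' referred to just before the corollary: for a projective family $(\sX,\sH)\to S$ with a line bundle $\sL$, openness of the nef locus is equivalent to the existence of a constant $N_0$ such that every fibre $X_s$ on which $\sL_s$ fails to be nef carries an effective curve $C$ with $\sL_s\cdot C<0$ and $H\cdot C\le N_0$. Having established openness, fix such an $N_0$. It then remains to convert such a curve into a wall divisor. For $s\in S\setminus S_{\nef}$ with $C$ as above, I would decompose $[C]$ as a non-negative combination of generators of extremal rays of the Mori cone of $X_s$ (the cone theorem applies, $X_s$ being projective; these generators are MBM classes in the sense of \cite{AVrationalcurves}); as $\sL_s\cdot C<0$, some such generator $R$ has $\sL_s\cdot R<0$, and $H\cdot R$ is then bounded by a fixed multiple of $H\cdot C\le N_0$. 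By \cite[Prop. 1.5]{MonKahCone} the hyperplane $R^{\perp}$ is a wall of $\sK_{X_s}$, so the primitive integral divisor $D$ on the $q_{X_s}$-dual ray of $\bR_{\ge 0}[R]$ is a wall divisor with $D^{\perp}\cap\overline{\sK}_{X_s}\neq\varnothing$, with $q_{X_s}(D,\alpha)=q_{X_s}(D,\sL_s)<0$, and with $q_{X_s}(H,D)\le c\,N_0$, where $c$ depends only on the deformation type (it controls the denominators relating $[C]$, $[R]$ and $D$, and is bounded using the Amerik--Verbitsky square bound). Hence $\sigma_s\le cN_0$, and one may take $N:=cN_0+1$. (When $\sL_s$ is nef the minimum defining $\sigma_s$ is over the empty set and the inequality is vacuous, or holds with the convention $\min\varnothing=-\infty$.)

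The main difficulty is not conceptual but lies in this last translation: producing, from an effective curve of bounded $\sH$-degree, a genuine wall divisor of the correct sign orthogonal to a face of $\overline{\sK}_{X_s}$, and bounding the constant $c$ uniformly in $s$ --- for which one uses the Amerik--Verbitsky bound \cite{AVcone} on the Beauville--Bogomolov square of wall divisors to keep the lattice data uniform across the deformation type. A secondary point is to match the Kuranishi-space hypotheses of \Cref{thm: openness of nef} with the family-over-a-base setting of the corollary when deducing openness of $S_{\nef}$, which the universality of the Kuranishi family handles.
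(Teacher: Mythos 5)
Your first step---openness of the locus where $\sL_s$ is nef, deduced from \Cref{thm: openness of nef} via universality of the Kuranishi family---matches the paper. The second half takes a different route and contains a genuine gap. You invoke Moriwaki's Theorem 1 to obtain a uniform bound $N_0$ on $H\cdot C$ for some irreducible curve $C$ with $\sL_s\cdot C<0$ on each non-nef fibre, and then try to convert $C$ into a wall divisor $D$ with $q_{X_s}(H,D)\le cN_0$. That conversion does not work as stated. Writing $[C]$ as a non-negative combination of primitive extremal generators $R_i$ (in general only a limit of such combinations: the Mori cone of a hyperk\"ahler manifold need not be rational polyhedral, and the cone theorem says nothing here since $K_{X_s}\equiv 0$), you can indeed find some generator $R$ with $\sL_s\cdot R<0$; but the inequality $H\cdot R\le c\,H\cdot C$ fails in general, because the coefficient $a$ of $R$ in the decomposition can be arbitrarily small, so $H\cdot R$ can be arbitrarily large even though its contribution $a(H\cdot R)$ to $H\cdot C\le N_0$ is bounded. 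The Amerik--Verbitsky bound controls $q(D)$, i.e.\ the proportionality constant between an MBM class and its dual wall divisor, but it gives no control on $q(H,D)$, which is exactly the quantity to be bounded; note that $q(h,v)$ is unbounded on the set $\cW$ of candidate wall divisors, so the existence of \emph{one} such $D$ of small $q(H,D)$ cannot be extracted fibrewise from lattice considerations. Hence your final inequality $\sigma_s\le cN_0$ is unsupported.

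The paper avoids this issue entirely: instead of applying Moriwaki's Theorem 1 to curves and translating afterwards, it re-runs Moriwaki's argument with the wall-divisor-defined $\sigma_s$ in place of the curve-theoretic one. After producing the open set $U$ where $\sL_s$ is nef (so $\sigma_s=0$ there), one takes an irreducible component $S'$ of $S\setminus U$ and applies the analogue of \cite[Lemma 2]{Moriwaki}: a wall divisor realizing $\sigma_{\eta_{S'}}$ at the generic point corresponds to a flat class $v\in\cW$, so $q(H,D)$ and $q(D,\alpha)$ are constant along $S'$ and the wall-divisor property specializes; this gives $\sigma_s<\sigma_{\eta_{S'}}$ for all $s\in S'$, and finitely many components yield the uniform $N$. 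In other words, the uniformity comes from a specialization argument over the base, not from a fibrewise comparison between curves and wall divisors. To rescue your structure you would have to prove the missing comparison (from an $\sL_s$-negative irreducible curve of bounded $H$-degree to a wall divisor cutting $\overline{\sK}_{X_s}$ of bounded $q(H,\cdot)$), and I do not see how to do that on a single fibre.
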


 \begin{proof}
    Since, on hyperk\"ahler manifolds, we can detect nefness by intersecting with wall divisors (with respect to $q_X$), we are able to mimic Moriwaki's argument to obtain the result. Indeed, let $X = \sX_0$, then by \cite[Th\'eor\`em 5, Corollaire 1]{Bea83} there is a universal family over $\Def(X,H,L)$ and there is a map $\varphi\colon S\to \Def(X,H,L)$ such that $\sX$ is the pullback of the universal family over $\Def(X,H,L)$. By Theorem \ref{thm: openness of nef} above,  there is an open subset $U\subset S$ such that for all $s\in U$, $\sL_s$ is nef on $X_s$. In particular, for all $s\in U$, $\sigma_s = 0$. Let $S'\subset S\setminus U$ be an irreducible component. We can conclude from \cite[Lemma 2]{Moriwaki} that for all $s\in S'$ we have $\sigma_s<\sigma_{\eta_{S'}}$ where $\eta_{S'}$ is the generic point of $S'$. This concludes the result.
  
 \end{proof}

\section*{Acknowledgments} 
We are grateful to BIRS in Banff, and to the organizers of the workshop ``Women in Mathematical Physics II" in August 2023, where this project started. We would like to thank Ignacio Barros, Emma Brakkee, Daniel Huybrechts, and  Claire Voisin  for interesting conversations and comments on the results. 
We thank Christian Lehn, Giovanni Mongardi, Salim Tayou, and the referee for their useful comments. The third named author is partially supported by a grant from the Simons Foundation/SFARI (522730, LK).

\bibliographystyle{halpha}
\bibliography{main}
\end{document}